\newcommand{\R}{\mathbb R}
\newcommand{\N}{\mathbb N}
\newcommand{\E}{\mathbb E}
\newcommand{\rate}{\mathbb I}
\newcommand{\Pro}{\mathbb P}
\newcommand{\vol}{\mathrm{vol}}
\def\dint{\textup{d}}
\newcommand{\B}{\ensuremath{{\mathbb B}}}
\newcommand{\eps}{\varepsilon}
\DeclareMathOperator{\id}{id}
\newtheorem{thm}{Theorem}[section]
\newtheorem{cor}[thm]{Corollary}
\newtheorem{lemma}[thm]{Lemma}
\newtheorem{df}[thm]{Definition}
\newtheorem{proposition}[thm]{Proposition}
\theoremstyle{definition}
\newtheorem{rmk}[thm]{Remark}
\newcommand{\toas}{\overset{{\rm a.s.}}{\underset{n\to\infty}\longrightarrow}}
\newcommand{\ton}{\overset{}{\underset{n\to\infty}\longrightarrow}}
\begin{document}


\title[]{A new look at random projections of the cube \\and general product measures}

\author[Z. Kabluchko]{Zakhar Kabluchko}
\address{Zakhar Kabluchko: Institut f\"ur Mathematische Stochastik, Westf\"alische Wilhelms-Uni\-ver\-sit\"at M\"unster, Germany.}
\email{zakhar.kabluchko@uni-muenster.de}

\author[J. Prochno]{Joscha Prochno}
\address{Joscha Prochno: Institut f\"ur Mathematik \& Wissenschaftliches Rechnen, Karl-Franzens-Universit\"at Graz, Austria} \email{joscha.prochno@uni-graz.at}

\author[C. Th\"ale]{Christoph Th\"ale}
\address{Christoph Th\"ale: Faculty of Mathematics, Ruhr University Bochum, Germany.} \email{christoph.thaele@rub.de}

\keywords{Cube, Gaussian random matrices, Gaussian projections, high-dimensional probability, Hausdorff distance, large deviations, law of large numbers, random projections, Stiefel manifold}
\subjclass[2010]{Primary: 60F10 Secondary: 52A22, 52A23, 60B20, 60G57}



\begin{abstract}
A strong law of large numbers for $d$-dimensional random projections of the $n$-dimensional cube is derived. It shows that with respect to the Hausdorff distance a properly normalized random projection of $[-1,1]^n$ onto $\R^d$ almost surely converges to a centered $d$-dimensional Euclidean ball of radius $\sqrt{2/\pi}$, as $n\to\infty$. For every point inside this ball we determine the asymptotic number of vertices and the volume of the part of the cube projected `close' to this point. Moreover, large deviations for random projections of general product measures are studied. Let $\nu^{\otimes n}$ be the $n$-fold product measure of a Borel probability measure $\nu$ on $\R$, and let $I$ be uniformly distributed on the Stiefel manifold of orthogonal $d$-frames in $\mathbb{R}^n$. It is shown that the sequence of random measures $\nu^{\otimes n}\circ(n^{-1/2}I^*)^{-1}$, $n\in\N$, satisfies a large deviations principle with probability $1$. The rate function is explicitly identified in terms of the moment generating function of $\nu$. At the heart of the proofs lies a transition trick which allows to replace the uniform projection by the Gaussian one. A number of concrete examples are discussed as well, including the uniform distributions on the cube $[-1,1]^n$ and the discrete cube $\{-1,1\}^n$ as a special cases.
\end{abstract}

\maketitle


\section{Introduction \& Main Results}

\subsection{General introduction}

One of the central aspects of the theory of high-dimensional probability 
is the study of convex bodies, i.e., compact and convex sets with non-empty interior, as the dimension of the ambient space $\R^n$ tends to infinity. The investigation of their geometry and asymptotic shape combines methods and ideas from probability theory, classical convex geometry, and functional analysis. The understanding of such high-dimensional structures, in particular through their lower-dimensional projections, has turned out to be crucial in numerous applications, among others in statistics and machine learning in the form of dimensionality reduction, clustering, or regression \cite{BM2001, FB2003, MM2010}, in compressed sensing when studying general performance bounds for sparse recovery methods or low-rank matrix recovery \cite{CDK2015, FPRU2010, HPV2017}, or in information-based complexity when investigating the tractability of multivariate integration or approximation problems \cite{HKNPU2019_survey, HKNPU2019, HPU2019}.

It has often been the probabilistic point of view that has lead to groundbreaking new results and insights into high-dimensional structures. In the classical theory as well as in modern developments, laws of large numbers and central limit theorems -- describing the typical behavior of random objects -- have been obtained for various quantities related to the geometry of convex bodies (see, e.g., \cite{APT2017, KPT17CCM, KPT19Part2, MeckesMeckes, PPZ2014, ProchnoThaeleTurchiChapter, SchechtmanSchmuckenschlaeger, SchmuckenschlaegerCLT} or the textbooks \cite{AsymptoticGeometricAnalysisBookPart1,IsotropicConvexBodies}) and provided us with a quite deep understanding of the asymptotic behavior of convex bodies. However, the beauty of universality described by central limit phenomena comes at a price. For instance, the celebrated central limit theorem for convex bodies of Klartag \cite{KlartagCLT,KlartagCLT2} roughly speaking says that most lower-dimensional marginals of a convex body in high dimensions are close to being Gaussian. Against this light, it is not possible to distinguish between different initial bodies via their lower-dimensional marginals on a scale of Gaussian fluctuations.

On the other hand, it is well known in probability theory that, in sharp contrast to the central limit theorem, the large deviations behaviour of a sum of random variables is very sensitive to the distribution of the terms in the sum. Very recently, this point of view was transferred to high-dimensional probability and asymptotic convex geometry by Gantert, Kim, and Ramanan \cite{GKR} who studied the large deviations behaviour of one-dimensional random projections of $\ell_p$-balls in $\R^n$, as $n\to\infty$. In a quite short period of time, their work triggered a number of papers. Among others, the so-called annealed case in \cite{GKR} was generalized to higher-dimensional marginals in \cite{APT2018}. The large deviations behavior of the $q$-norm of random points in high-dimensional $\ell_p$-balls was studied in \cite{KPT17CCM, KPT19Part2}. In \cite{KimRamanan}, Sanov-type large deviations for high-dimensional $\ell_{p}$-spheres were obtained and a non-commutative version for the empirical spectral measure of random matrices in Schatten unit balls has recently been proved in \cite{KPT2019}. The present paper continues and complements this body of current research as we shall explain now.

We start by investigating the shape of $d$-dimensional random projections of the $n$-dimensional cube, where $d$ is fixed and $n\to\infty$. We shall show that, after a suitable rescaling, the sequence of randomly projected cubes satisfies a strong law of large numbers, see Theorem \ref{thm:SLLN}. Roughly speaking, this result says that, typically, such random projections are close to a Euclidean ball of radius $\sqrt{2n/\pi}$. We then move on with a description of how the projected mass of the cube or the projections of its vertices are distributed inside this ball. More generally, we shall provide an explicit description of the large deviations of an $n$-fold product measure under $d$-dimensional random projections, where $d$ is fixed and $n\to\infty$, see Theorem \ref{thm:as LDP}.

\subsection{Main results}

In this section we present the main results of this paper, which are stated as Theorem \ref{thm:SLLN} and Theorem \ref{thm:as LDP}. The proof of Theorem \ref{thm:SLLN} as well as the proof of Theorem \ref{thm:as LDP} is build upon an analysis of Gaussian random projections and at its heart relies on a transition trick, which allows us to go from the Gaussian setting to the general one of Stiefel manifolds.

We start by introducing the set-up we shall be working in. Let $(g_{ij})_{i,j\in\N}$ be an infinite array of independent standard Gaussian random variables.  Given some $n,d\in\N$ with $d\leq n$ consider a Gaussian random $(d\times n)$-matrix  $G=(g_{ij})_{i,j=1}^{d,n}:\R^n\to\R^d$ and a pair of adjoint  linear mappings $I:\R^d\to\R^n$ and $I^*:\R^n \to \R^d$ given by
\begin{align}\label{eq:defIStern}
I = G^*(GG^*)^{-1/2} \qquad\text{and}\qquad  I^* = (GG^*)^{-1/2}G.
\end{align}
 As we shall explain in Lemma \ref{lem:uniform distribution Stiefel manifold} below, this particular choice of $I$ and $I^*$ means that  the collection of  columns of $I$ is uniformly distributed on the Stiefel manifold $\mathbb V_{n,d}$ of orthonormal $d$-frames in $\R^n$. Therefore, $I: \R^d\to \R^n$ is a random isometric embedding whose image $I\R^d$ is a $d$-dimensional linear subspace of $\R^n$ distributed according to the Haar probability measure on the Grassmannian of all such subspaces. Hence, we can regard the operator $I I^*:\R^n \to \R^n$ as an orthogonal projection onto the random, uniformly distributed $d$-dimensional linear subspace $I\R^d$. We shall be interested in randomly projected objects such as the randomly projected cube $II^*([-1,1]^n)$. Since the operator $I$ is an isometry between $\R^d$ and its image $I \R^d$, we can use it to identify $II^*([-1,1]^n)\subset \R^n$ and $I^*([-1,1]^n)\subset \R^d$. By abuse of language, we shall refer to the latter object as the randomly projected cube.

The first object we study is the `typical' shape of the (properly normalized) randomly projected cube $I^*([-1,1]^n)$, as $n\to\infty$. To compare its shape to another one, we use the classical notion of Hausdorff distance. More precisely, we denote by $\mathcal K(\R^d)$ the metric space of compact subsets of $\R^d$ endowed with the Hausdorff distance $\dint_H$, which, for $A,B\in \mathcal K(\R^d)$, is given by
\[
\dint_H\big(A,B\big):= \max\Big\{ \inf\big\{r\geq 0\,:\, A\subseteq B + \B_2^d(0,r) \big\}\,,\, \inf\big\{r\geq 0\,:\, B\subseteq A + \B_2^d(0,r) \big\} \Big\},
\]
where $\B_2^d(0,r)$ denotes the Euclidean ball centered at the origin and having radius $r>0$. The next result is a strong law of large numbers and shows that typically, after a suitable rescaling, $I^*([-1,1]^n)$ is close to the centered Euclidean ball of radius $\sqrt{2/\pi}$.

\begin{thm}\label{thm:SLLN}
For fixed $d\in\N$ we have that
$$
\dint_H\bigg({1\over\sqrt{n}}I^*([-1,1]^n), \B_2^d\Big(0,\sqrt{\tfrac{2}{\pi}}\Big)\bigg)\toas 0,
$$
where $\toas$ indicates that the limit is understood in the almost sure sense.
\end{thm}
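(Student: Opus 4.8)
The plan is to reduce the Hausdorff-distance statement to one about support functions and then to apply the transition trick, which replaces the uniform (Stiefel) projection by a Gaussian one. Recall that for nonempty compact convex sets $A,B\subseteq\R^d$ one has $\dint_H(A,B)=\sup_{\theta\in\SSS^{d-1}}|h_A(\theta)-h_B(\theta)|$, where $h_A(\theta)=\sup_{x\in A}\langle x,\theta\rangle$ denotes the support function. Since $I$ and $I^*$ are adjoint and the support function of the cube is the $\ell_1$-norm, the random compact convex set $I^*([-1,1]^n)$ has support function $h_{I^*([-1,1]^n)}(\theta)=\sup_{x\in[-1,1]^n}\langle x,I\theta\rangle=\|I\theta\|_1$, while $h_{\B_2^d(0,\sqrt{2/\pi})}(\theta)=\sqrt{2/\pi}$ for all $\theta\in\SSS^{d-1}$. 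Hence the theorem is equivalent to
\[
\sup_{\theta\in\SSS^{d-1}}\Big|\tfrac1{\sqrt n}\|I\theta\|_1-\sqrt{\tfrac2\pi}\Big|\toas0 .
\]

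For the transition step I would write $I\theta=G^*(GG^*)^{-1/2}\theta$ and observe, via the entrywise strong law of large numbers, that $\tfrac1nGG^*\toas\mathrm{Id}_d$; by continuity of $M\mapsto M^{-1/2}$ on positive definite matrices this gives $\sqrt n\,(GG^*)^{-1/2}\toas\mathrm{Id}_d$ in operator norm, and also $\tfrac1{\sqrt n}\|G\|_{\mathrm{op}}=\big(\tfrac1n\|GG^*\|_{\mathrm{op}}\big)^{1/2}\toas1$. Using $\|G^*v\|_1\le\sqrt n\,\|G^*v\|_2\le\sqrt n\,\|G\|_{\mathrm{op}}\|v\|_2$ one then obtains, uniformly over $\theta\in\SSS^{d-1}$,
\begin{align*}
\Big|\tfrac1{\sqrt n}\|I\theta\|_1-\tfrac1n\|G^*\theta\|_1\Big|
&=\tfrac1n\Big|\,\|G^*\sqrt n(GG^*)^{-1/2}\theta\|_1-\|G^*\theta\|_1\,\Big|\\
&\le\big\|\sqrt n(GG^*)^{-1/2}-\mathrm{Id}_d\big\|_{\mathrm{op}}\cdot\tfrac1{\sqrt n}\|G\|_{\mathrm{op}},
\end{align*}
and the right-hand side converges to $0$ almost surely. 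Thus it remains to show that $\sup_{\theta\in\SSS^{d-1}}\big|\tfrac1n\|G^*\theta\|_1-\sqrt{2/\pi}\big|\toas0$.

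To prove this last statement I would first fix a single direction $\theta\in\SSS^{d-1}$: then $(G^*\theta)_j=\sum_{i=1}^dg_{ij}\theta_i$, $j=1,\dots,n$, are independent standard Gaussian random variables, so the strong law of large numbers yields $\tfrac1n\|G^*\theta\|_1=\tfrac1n\sum_{j=1}^n|(G^*\theta)_j|\toas\E|Z|=\sqrt{2/\pi}$ with $Z$ standard Gaussian. To turn this into uniform convergence I would use that $\theta\mapsto\tfrac1n\|G^*\theta\|_1$ is Lipschitz on $\SSS^{d-1}$ with constant at most $\tfrac1{\sqrt n}\|G\|_{\mathrm{op}}$ (Cauchy--Schwarz again), which is eventually bounded by $2$ almost surely. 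For fixed $\eps>0$ pick a finite $\eps$-net $N_\eps$ of $\SSS^{d-1}$; the pointwise convergence holds simultaneously for all points of $N_\eps$ on an event of probability one, and on that event, for every $\theta\in\SSS^{d-1}$ and a nearest $\theta'\in N_\eps$,
\begin{align*}
\Big|\tfrac1n\|G^*\theta\|_1-\sqrt{\tfrac2\pi}\Big|
&\le\Big|\tfrac1n\|G^*\theta'\|_1-\sqrt{\tfrac2\pi}\Big|+\tfrac{\eps}{\sqrt n}\|G\|_{\mathrm{op}}\\
&\le\max_{\psi\in N_\eps}\Big|\tfrac1n\|G^*\psi\|_1-\sqrt{\tfrac2\pi}\Big|+2\eps
\end{align*}
for all large $n$, so that $\limsup_{n\to\infty}\sup_{\theta\in\SSS^{d-1}}\big|\tfrac1n\|G^*\theta\|_1-\sqrt{2/\pi}\big|\le2\eps$ almost surely. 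Intersecting the corresponding probability-one events over $\eps=1/k$, $k\in\N$, then gives the claim.

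The only genuinely delicate point is the passage from pointwise (in $\theta$) to uniform convergence: the exceptional null set must not depend on the direction, which is exactly where the $\eps$-net together with the almost sure boundedness of $\tfrac1{\sqrt n}\|G\|_{\mathrm{op}}$ is needed. The remaining ingredients --- the entrywise strong law of large numbers for $GG^*$, continuity of the matrix square root, and the one-dimensional strong law of large numbers for $\tfrac1n\sum_{j=1}^n|(G^*\theta)_j|$ --- are routine.
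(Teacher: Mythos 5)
Your proof is correct, and it reaches the same conclusion via a route that is conceptually parallel to the paper's but technically distinct in its key lemma. The paper also begins in the Gaussian picture and performs the same transition from $G$ to $I^*$ using $\tfrac1n GG^*\toas\id_{d\times d}$; the difference lies in how the Gaussian part is handled. The paper observes that $\tfrac1n G[-1,1]^n=\tfrac1n\bigoplus_{i=1}^n[-X_i,X_i]$ with $X_i=Ge_i$ independent standard Gaussian vectors in $\R^d$, and then invokes the Artstein--Vitale strong law of large numbers for random compact convex sets to conclude Hausdorff convergence to the expectation body $E$, identified via its support function $h_E(u)=\E|\langle u,X_1\rangle_2|=\sqrt{2/\pi}\,\|u\|_2$. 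You instead translate the whole statement into support-function language at the outset (noting $h_{I^*([-1,1]^n)}(\theta)=\|I\theta\|_1$ and $h_{\frac1n G[-1,1]^n}(\theta)=\tfrac1n\|G^*\theta\|_1$), establish pointwise convergence $\tfrac1n\|G^*\theta\|_1\toas\sqrt{2/\pi}$ by the classical scalar SLLN, and upgrade to uniform convergence over $\SSS^{d-1}$ with an $\eps$-net, using the almost sure Lipschitz bound $\tfrac1{\sqrt n}\|G\|_{\rm op}\toas 1$. In effect you re-derive, by hand, exactly the special case of Artstein--Vitale that the paper cites (that theorem is itself proved by embedding convex bodies into $\mathscr C(\SSS^{d-1})$ via support functions and applying a Banach-space SLLN). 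Your argument is therefore more self-contained and elementary, at the cost of some extra bookkeeping (the net, the Lipschitz estimate, the intersection over $\eps=1/k$); the paper's is shorter because it outsources precisely this work to a known theorem. One small remark: the paper's displayed formula for $h_E(u)$ contains a typo ($\|u\|_2^2$ should read $\|u\|_2$); your computation gets the correct power.
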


As a direct consequence of this theorem, we obtain a strong law of large numbers for the intrinsic volumes $V_1,\ldots,V_d$ of the projected cube. Here, we recall that the $k$-th intrinsic volume $V_k(K)$, $k\in\{1,\ldots,d\}$, of a convex set $K\subseteq\R^d$ is defined as
$$
V_k(K) := {d\choose k}{\Gamma(1+{k\over 2})\Gamma(1+{d-k\over 2})\over\Gamma(1+{d\over 2})}\E[\vol_k(K|L)],
$$
where $L$ is a random $k$-dimensional linear subspace of $\R^d$ which is distributed on the Grassmannian of all such linear subspaces according to the Haar probability measure, $K|L$ stands for the orthogonal projection of $K$ onto $L$ and $\vol_k$ for the $k$-dimensional Lebesgue measure (in $L$). In particular, if $K$ has dimension $d$, $V_d(K)$ is the $d$-dimensional volume, $2V_{d-1}(K)$ is the surface area of the boundary, and $V_1(K)$ coincides with a constant multiple of the mean width of $K$. For the special case of the volume (corresponding to $k=d$) the next result complements the central limit theorem of Paouris, Pivovarov, and Zinn \cite{PPZ2014}.

\begin{cor}\label{cor:slln volume}
For fixed $d\in\N$ and $k\in\{1,\ldots,d\}$ the $k$-th intrinsic volume $V_k(I^*([-1,1]^n))$ of the projected cube satisfies
\[
\frac{V_k(I^*([-1,1]^n))}{n^{\frac{k}{2}}} \toas \Big({2\over\pi}\Big)^{k/2}V_k(\B_2^d(0,1))=2^{k/2}{d\choose k}{\Gamma(1+{d-k\over 2})\over\Gamma(1+{d\over 2})}\,.
\]
\end{cor}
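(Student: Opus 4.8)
The corollary is a soft consequence of Theorem~\ref{thm:SLLN}, and the plan is to combine the almost sure Hausdorff convergence of the rescaled projected cube provided there with two elementary properties of the intrinsic volumes: their homogeneity of degree $k$ and their continuity with respect to the Hausdorff metric.

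First I would observe that for every $n\geq d$ the set $I^*([-1,1]^n)$ is a compact convex set in $\R^d$ (in fact a polytope), being the image of the cube $[-1,1]^n$ under the linear map $I^*$, so that it, its rescaled copies $n^{-1/2}I^*([-1,1]^n)$, and the limiting ball $\B_2^d(0,\sqrt{2/\pi})$ all lie in $\mathcal K(\R^d)$ and, more precisely, in its subset of compact convex sets. Directly from the displayed definition of $V_k$ one reads off that $V_k(\lambda K)=\lambda^k V_k(K)$ for every $\lambda>0$, since orthogonal projection onto a subspace $L$ commutes with dilations and $\vol_k$ is $k$-homogeneous in $L$. Applying this with $\lambda=n^{-1/2}$ gives
\[
n^{-k/2}\,V_k\big(I^*([-1,1]^n)\big)=V_k\Big(\tfrac{1}{\sqrt n}\,I^*([-1,1]^n)\Big).
\]

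Next I would invoke the classical fact that each $V_k$, $k\in\{1,\dots,d\}$, is a continuous functional on the space of compact convex subsets of $\R^d$ equipped with the Hausdorff distance; this follows, for instance, from the Steiner formula, as the coefficients of the polynomial $\eps\mapsto\vol(K+\eps\B_2^d)$ depend continuously on $K$, and it requires no full-dimensionality of $K$. Since Theorem~\ref{thm:SLLN} asserts that $\tfrac{1}{\sqrt n}I^*([-1,1]^n)\toas\B_2^d(0,\sqrt{2/\pi})$ in $(\mathcal K(\R^d),\dint_H)$ and all the sets involved are convex, applying $V_k$ and using the previous display yields, almost surely,
\[
n^{-k/2}\,V_k\big(I^*([-1,1]^n)\big)\toas V_k\big(\B_2^d(0,\sqrt{2/\pi})\big).
\]
It then remains to evaluate the right-hand side. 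By homogeneity $V_k(\B_2^d(0,\sqrt{2/\pi}))=(2/\pi)^{k/2}V_k(\B_2^d(0,1))$, and $V_k(\B_2^d(0,1))$ is read off from the definition of $V_k$ itself: the orthogonal projection of the unit ball onto an arbitrary $k$-dimensional subspace is the $k$-dimensional unit ball, of $k$-volume $\pi^{k/2}/\Gamma(1+\tfrac k2)$, so the $\Gamma(1+\tfrac k2)$ factors cancel and $V_k(\B_2^d(0,1))=\pi^{k/2}\binom{d}{k}\Gamma(1+\tfrac{d-k}{2})/\Gamma(1+\tfrac d2)$; multiplying by $(2/\pi)^{k/2}$ produces exactly the asserted constant.

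There is essentially no genuine obstacle here: the entire analytic content sits in Theorem~\ref{thm:SLLN}, and this corollary is a routine extraction from it. The only external input is the Hausdorff-continuity of intrinsic volumes, which is classical; one could alternatively bypass it by combining the elementary continuity of the Lebesgue volume under Hausdorff convergence of convex bodies — applied in each subspace $L$, using that projections commute with Hausdorff limits — with dominated convergence in the average over the Grassmannian that defines $V_k$, the dominating constant coming from the eventual boundedness of $\tfrac{1}{\sqrt n}I^*([-1,1]^n)$ guaranteed by Theorem~\ref{thm:SLLN}.
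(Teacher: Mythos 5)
Your proposal is correct and takes essentially the same route as the paper: it combines Theorem~\ref{thm:SLLN} with the $k$-homogeneity and Hausdorff-continuity of the intrinsic volumes, exactly as the paper's one-line proof does, just with the rescaling step and the evaluation of $V_k(\B_2^d(0,1))$ written out explicitly.
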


A central part of this work is to quantify how the randomly projected mass of the cube is distributed inside the ball $\B_2^d(0,\sqrt{2/\pi})$. We do this in the framework of large deviations theory and shall approach this problem in the following more general set-up. We fix a probability measure $\nu$ on $\R$ with everywhere finite moment generating function $M_\nu(x) =\int_\R e^{x u} \nu(\dint u)$, $x\in\R$, and assume that
\begin{equation}\label{eq:MBedingung}
\int_{\R}|\log M_\nu(x)|\,e^{-\varepsilon x^2}\,\dint x<\infty
\end{equation}
for every $\varepsilon>0$. By $\nu^{\otimes n}$ we denote the $n$-fold product probability measure of $\nu$ on $\R^n$. Now, we define the random probability measure
\begin{equation}\label{eq:DefmuN}
\mu^I_{\otimes n} := \nu^{\otimes n} \circ \Big(\frac{1}{\sqrt{n}}I^*\Big)^{-1}
\end{equation}
on $\R^d$, where $I^*:\R^n \to \R^d$ is as in \eqref{eq:defIStern}. In other words, $\mu^I_{\otimes n}$ is a random projection of the product measure $\nu^{\otimes n}$ rescaled by the factor $1/\sqrt n$. This means that
$$
\int_{\R^d}h(x)\,\mu_{\otimes n}^I(\dint x) = \int_{\R^n}h\Big({1\over\sqrt{n}}I^*(x)\Big)\,\nu^{\otimes n}(\dint x)
$$
for all non-negative measurable functions $h:\R^d\to\R$.

Our next result shows that the sequence of random probability measures $(\mu^I_{\otimes n})_{n\in\N}$ satisfies an almost sure large deviations principle (LDP) and identifies the corresponding speed and rate function. For a formal definition of an almost sure LDP we refer to Definition \ref{def:LDP} below, but we mention that roughly speaking this means that, almost surely,
$$
\lim_{n\to\infty}{1\over n}\log\mu_{\otimes n}^I(A) = -\inf_{x\in A}\rate(x)
$$
for all `reasonable' sets $A\subseteq\R^d$, where $\rate$ is called the rate function. 

\begin{thm}\label{thm:as LDP}
Let $\nu$ be a probability measure on $\R$ satisfying \eqref{eq:MBedingung} and let $\mu^I_{\otimes n}$ be defined by \eqref{eq:DefmuN}. Then the sequence $(\mu^I_{\otimes n})_{n\in\N}$ of random probability measures on $\R^d$ satisfies an almost sure LDP with speed $n$ and good rate function $\Lambda^*$ given as the Legendre-Fenchel transform of
\[
\Lambda:\R^d \to\R,\quad t\mapsto \E\big[\log M_{\nu}(\|t\|_2\,g)\big],
\]
where $g\sim\mathcal N(0,1)$ is a standard Gaussian random variable and $\|\,\cdot\,\|_2$ denotes the Euclidean norm.
\end{thm}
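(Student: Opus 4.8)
The plan is to derive the LDP from the Gärtner–Ellis theorem, with the transition trick entering right at the start so as to replace the Stiefel projection $I^*$ by the raw Gaussian matrix $G$. The algebraic identity
$$
\frac{1}{\sqrt n}\,I^*=\Big(\tfrac1n GG^*\Big)^{-1/2}\Big(\tfrac1n G\Big)=:C_n\Big(\tfrac1n G\Big)
$$
shows that $\mu^I_{\otimes n}=\mu^G_{\otimes n}\circ C_n^{-1}$, where $\mu^G_{\otimes n}:=\nu^{\otimes n}\circ(\tfrac1n G)^{-1}$ is the Gaussian analogue of $\mu^I_{\otimes n}$ and $C_n:=(\tfrac1n GG^*)^{-1/2}\to\id_{\R^d}$ almost surely, by the entrywise strong law of large numbers for $\tfrac1n GG^*$. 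It thus suffices to (i) establish the almost sure LDP for $(\mu^G_{\otimes n})_{n\in\N}$ with speed $n$ and rate $\Lambda^*$, and (ii) transfer it to $(\mu^I_{\otimes n})_{n\in\N}$ using $C_n\to\id$.

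For (i), factorising the product measure gives that the normalised cumulant generating function of $\mu^G_{\otimes n}$ is
$$
\Lambda^G_n(t)=\frac1n\log\int_{\R^d}e^{n\langle t,x\rangle}\,\mu^G_{\otimes n}(\dint x)=\frac1n\sum_{j=1}^n\log M_\nu\big(\langle g_{\cdot j},t\rangle\big),\qquad t\in\R^d,
$$
where $g_{\cdot j}=(g_{1j},\dots,g_{dj})$ is the $j$-th column of $G$. Crucially, and in contrast to the case of $I^*$, the columns $g_{\cdot j}$ are i.i.d.\ standard Gaussian vectors in $\R^d$, so the summands are i.i.d.\ copies of $\log M_\nu(\|t\|_2 g)$ with $g\sim\mathcal N(0,1)$; by \eqref{eq:MBedingung} this has finite mean, and the strong law of large numbers yields $\Lambda^G_n(t)\to\Lambda(t)$ almost surely for each fixed $t$. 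Intersecting these events over $t\in\mathbb Q^d$ and using that a sequence of convex functions converging on a dense set to the finite convex (hence continuous) limit $\Lambda$ converges locally uniformly, we get $\Lambda^G_n\to\Lambda$ everywhere on a single event of probability one. On that event $\Lambda$ is finite on all of $\R^d$ — so $0\in\operatorname{int}\operatorname{dom}\Lambda$ and $\Lambda^*$ is automatically a good rate function — and differentiable, differentiation under the expectation being justified once more by \eqref{eq:MBedingung}; hence $\Lambda$ is essentially smooth and the Gärtner–Ellis theorem delivers the full LDP for $(\mu^G_{\otimes n})$ with good rate function $\Lambda^*$, finiteness of $\Lambda$ also supplying the exponential tightness that upgrades the upper bound from compact to closed sets.

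For (ii) I would argue by exponential equivalence. Realise $\mu^G_{\otimes n}$ as the law of $Z_n:=\tfrac1n G\,Y_n$ with $Y_n\sim\nu^{\otimes n}$, so that $C_nZ_n=\tfrac1{\sqrt n}I^*Y_n\sim\mu^I_{\otimes n}$, and work on the probability one event where $\|C_n-\id\|_{\mathrm{op}}\to 0$ and the LDP of step (i) holds. Fixing $G$, for any $\delta,R>0$ and $n$ large enough that $\|C_n-\id\|_{\mathrm{op}}<\delta/R$ we have
$$
\Pro\big(\|C_nZ_n-Z_n\|_2>\delta\big)\le\Pro\big(\|Z_n\|_2>R\big)=\mu^G_{\otimes n}\big(\{\|x\|_2>R\}\big),
$$
whence $\limsup_{n\to\infty}\tfrac1n\log\Pro(\|C_nZ_n-Z_n\|_2>\delta)\le-\inf_{\|x\|_2\ge R}\Lambda^*(x)$ by the LDP upper bound; since $\Lambda^*$ is good it tends to $+\infty$ at infinity, so letting $R\to\infty$ shows $\mu^G_{\otimes n}$ and $\mu^I_{\otimes n}$ are exponentially equivalent. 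Therefore $(\mu^I_{\otimes n})$ satisfies the same LDP, which is the claim.

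The probabilistic skeleton above — Gärtner–Ellis for the Gaussian model, then exponential equivalence — is routine once the candidate $\Lambda(t)=\E[\log M_\nu(\|t\|_2 g)]$ has been identified. The genuine obstacle is the analytic hypothesis \eqref{eq:MBedingung}: it is precisely what makes $\Lambda$ finite on all of $\R^d$ (so that $\Lambda^*$ is good and Gärtner–Ellis applies with no steepness to check), what makes $\log M_\nu(\|t\|_2 g)$ integrable (for the law of large numbers), and what licenses differentiation of $\Lambda$ under the expectation (for essential smoothness). Verifying these integrability statements — bounding $|(\log M_\nu)'(x)|$ by forward differences of $\log M_\nu$ and substituting into Gaussian integrals, where the weight $e^{-\varepsilon x^2}$ of \eqref{eq:MBedingung} surfaces after the change of variables — is where the real work lies; the rest is bookkeeping.
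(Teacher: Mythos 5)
Your proof is correct and follows the same two-stage architecture as the paper — Gärtner–Ellis for the Gaussian model $\mu^G_{\otimes n}$, then a transfer to $\mu^I_{\otimes n}$ via the a.s.\ convergence $\frac1n GG^*\to\id_{d\times d}$ — but the transfer step is handled by a genuinely different and tighter route. The paper proves the almost sure LDP for $(\mu^I_{\otimes n})$ directly: weak upper bound on compact sets via the inclusion $\frac1{\sqrt n}(GG^*)^{1/2}K\subseteq K+\varepsilon R\,\B_2^d$, weak lower bound on bounded open sets via $\varepsilon$-interiors and then extension to arbitrary open sets using goodness of $\Lambda^*$, and finally almost sure exponential tightness by a parallel inclusion argument. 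Your coupling $Z_n=\frac1n GY_n$, $C_nZ_n=\frac1{\sqrt n}I^*Y_n$, the bound $\Pro(\|(C_n-\id)Z_n\|_2>\delta)\leq\mu^G_{\otimes n}(\{\|x\|_2>R\})$ once $\|C_n-\id\|_{\rm op}<\delta/R$, and $R\to\infty$ using goodness of $\Lambda^*$, packages all three of the paper's sub-arguments into a single appeal to the preservation of an LDP under exponential equivalence (Dembo--Zeitouni Theorem 4.2.13); this is cleaner and buys you the full LDP in one shot rather than reconstructing it from weak bounds plus tightness. In step (i), your device of pointwise SLLN at rational $t$ plus the convexity lemma (pointwise convergence of finite convex functions on a dense set implies locally uniform convergence) is an equivalent substitute for the paper's Banach-space SLLN in $\mathscr C[-r,r]$, which obtains the same locally uniform convergence directly. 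The one place you leave real work undone is differentiability of $\Lambda$: the paper spends the better part of a page centering $\nu$, controlling $|(\log M_\nu)'|$ via convexity and integration by parts to justify differentiation under the Gaussian integral away from the origin, and a separate dominated-convergence argument at the origin. You correctly identify this as the analytic heart of the proof and your sketch (bound the derivative by differences of $\log M_\nu$ and insert into Gaussian integrals so that the $e^{-\varepsilon x^2}$ weight of \eqref{eq:MBedingung} appears) is pointing in exactly the right direction, but as written it is a plan rather than a verification.
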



\vskip 3mm
The rest of the paper is organized as follows. In Section \ref{sec:special cases}, we take a look at a few examples of distributions $\nu$ on $\R$ and determine in some of the cases the precise rate function $\Lambda^*$ or extract some of its properties. Section \ref{sec:prelim} provides the necessary concepts and results that are used in the proofs. Section \ref{sec:gaussian setting} is devoted to the analysis of large deviations in the Gaussian setting and the transition to the general case as well as the proof of Theorem \ref{thm:as LDP} are presented in Section \ref{sec:general projections}. Finally, in Section \ref{sec:slln}, we present the proof of the strong law of large numbers stated in Theorem \ref{thm:SLLN}.

\section{Some special cases}\label{sec:special cases}

Before we present the proofs of our main results, we discuss a few particular examples of distributions $\nu$ that satisfy \eqref{eq:MBedingung} and for which the function $\Lambda$ in Theorem \ref{thm:as LDP} (or at least some of its properties) can be identified explicitly.

\subsection{The standard Gaussian distribution}

We start by taking for $\nu$ the standard Gaussian distribution $\mathcal{N}(0,1)$. Then, $\nu^{\otimes n}$ is the standard Gaussian distribution on $\R^n$, and its projection $\mu^I_{\otimes n}$ is the standard Gaussian distribution on $\R^d$ rescaled by $1/\sqrt n$. Therefore, the sequence $\mu^I_{\otimes n}$ satisfies a large deviations principle with speed $n$ and rate function
\begin{align*}
\Lambda^*(u) = {\|u\|_2^2\over 2},\qquad u\in\R^d.
\end{align*}
With some effort, one can check that Theorem~\ref{thm:as LDP} gives the same result. Indeed, the moment generating function $M_\nu$ is given by
\begin{align*}
M_\nu(y)={1\over\sqrt{2\pi}}\int_{\R}e^{yx}\,e^{-x^2/2}\,\dint x = e^{y^2/2},\qquad y\in\R.
\end{align*}
Moreover, we can estimate
\begin{align*}
\int_{\R}|\log M_\nu(x)|\,e^{-\varepsilon x^2}\,\dint x = {1\over 2}\int_{\R}x^2\,e^{-\varepsilon x^2}\,\dint x = {\sqrt{\pi}\over 4\varepsilon^{3/2}}<\infty
\end{align*}
for any $\varepsilon>0$, which implies that \eqref{eq:MBedingung} is satisfied for all $\varepsilon>0$. Next, we compute $\Lambda(t)=\E[\log M_\nu(\|t\|_2g)]$ for $t\in\R^d$ as follows:
\begin{align*}
\Lambda(t) &= {1\over\sqrt{2\pi}}\int_{\R}\log\big(e^{\|t\|_2^2y^2/2}\big)e^{-y^2/2}\,\dint y={\|t\|_2^2\over 2}\,{1\over\sqrt{2\pi}}\int_{\R}y^2\,e^{-y^2/2}\,\dint y = {\|t\|_2^2\over 2}\,.
\end{align*}
In this particular situation also the Legendre-Fenchel transform of this function can be computed explicitly and is given by
\begin{align*}
\Lambda^*(u) = \sup_{t\in\R^d}\Big[\langle u,t\rangle_2-{\|t\|_2^2\over 2}\Big] = {\|u\|_2^2\over 2},\qquad u\in\R^d,
\end{align*}
since the supremum is attained at $t=u$.

\subsection{The uniform distribution on $\{-1,1\}$}
As we already know, the random $d$-dimensional projection of the cube $[-1,1]^n$ is close to the Euclidean ball of radius $\sqrt{2n/\pi}$. We now investigate how the projected vertices of the cube are distributed inside this ball. To this end, we apply Theorem~\ref{thm:as LDP} in the special case when $\nu$ is the uniform distribution on $\{-1,1\}$, that is, we take $\nu:=\frac{1}{2}(\delta_{-1}+\delta_{+1})$, where $\delta_{x}$ stands for the Dirac measure at $x \in\R$. Then, $\nu^{\otimes n}$ is the uniform distribution on the discrete cube $\{-1,1\}^n$. We are interested in random $d$-dimensional projections of this distribution, for fixed $d\in\N$ and as $n\to\infty$. The moment generating function of $\nu$ is given by
\[
M_\nu(y) = \int_\R e^{y x}\,\nu(\dint x) = \frac{e^{-y}+e^{y}}{2} = \cosh(y),\qquad y\in\R.
\]
Moreover, using that $\cosh(x)\leq\max\{e^x,e^{-x}\}$ for all $x\in\R$, we see that
\begin{align*}
\int_{\R}|\log M_\nu(x)|e^{-\varepsilon x^2}\,\dint x \leq \int_{-\infty}^0(-x)\,e^{-\varepsilon x^2}\,\dint x+\int_0^{\infty}x\,e^{-\varepsilon x^2}\,\dint x<\infty
\end{align*}
so that \eqref{eq:MBedingung} is satisfied for all $\varepsilon>0$. Since $\Lambda(t)$ depends on $t\in\R^d$ only via the norm $\|t\|_2$, we have $\Lambda(t) = \Psi(\|t\|_2)$, where the function $\Psi:[0,\infty)\to\R$ is given by
$$
\Psi(s) = \E[\log \cosh(sg)],
$$
and $g\sim\mathcal{N}(0,1)$ is a standard Gaussian random variable. The rate function $\Lambda^*$ appearing in Theorem~\ref{thm:as LDP} is then given by $\Lambda^*(t) = \Psi^*(\|t\|_2)$, where $\Psi^*$ is the Legendre-Fenchel transform of $\Psi$. Roughly speaking, Theorem~\ref{thm:as LDP} states that the number of vertices of the cube $[-1,1]^n$ whose projection is ``close'' to the point $t\sqrt n$ is given by $2^n e^{-\Lambda^*(t) n +o(n)}$, for $t\in\R^d$, as $n\to\infty$.

Let us check that $\Psi^*(u) = +\infty$ whenever $u > \sqrt{2/\pi}$ and that $\Psi^*(\sqrt{2/\pi}) = \log 2$.
Indeed, for $s\geq 0$ one has that
\begin{align}\label{eq:021019-1}
\Psi'(s) = \E[g\tanh(gs)] \leq \E|g| = \sqrt{2\over\pi},
\end{align}
for all $s\geq 0$. Moreover,
\begin{align}\label{eq:021019-2}
\notag \Psi(s) = \E\Big[\log {e^{sg}+e^{-sg}\over 2}\Big] &= -\log 2 + \E[|sg|] + \E[\log(1+e^{-2|sg|})]\\
\notag &= -\log 2 + \sqrt{2\over\pi}\,s + \E[\log(1+e^{-2|sg|})]\\
&= -\log 2 + \sqrt{2\over\pi}\,s + o(1),
\end{align}
as $s\to \infty$. If $u>\sqrt{2/\pi}$ then \eqref{eq:021019-2} implies that $us-\Psi(s)\to\infty$, as $s\to\infty$, hence $\Psi^*(u)=+\infty$. On the other hand, if $u=\sqrt{2/\pi}$, then $\sqrt{2/\pi}\,s-\Psi(s)$ is a non-decreasing function of $s$ by \eqref{eq:021019-1}, which implies that its supremum is attained as $s\to\infty$ and hence equals $\log 2$ in view of \eqref{eq:021019-2}. 

\subsection{The uniform distribution on $[-1,1]$}

Let us now consider for $\nu$ the continuous analogue of the discrete uniform distribution studied in the previous section. That is, we let $\nu$ be the uniform distribution on the interval $[-1,1]$ whose density is $1/2$ on $[-1,1]$ and zero otherwise.  Then, $\nu^{\otimes n}$ is the uniform distribution on the cube $[-1,1]^n$ and we are interested in its random $d$-dimensional projections.  Clearly,
\[
M_\nu(y) = \int_\R e^{y x}\,\nu(\dint x) = \frac{1}{2}\int_{-1}^{1} e^{y x}\,\dint x = \frac{\sinh(y)}{y},\qquad y\in\R.
\]
Since $\sinh(x)\leq e^{x}/2$ for $x>0$, we also have that, for any $\varepsilon>0$,
\begin{align*}
\int_{\R}|\log M_\nu(x)|e^{-\varepsilon x^2}\,\dint x \leq -2\int_{\R}\log(x)\,e^{-\varepsilon x^2}\,\dint x + 2\int_{\R}(x-\log 2)\,e^{-\varepsilon x^2}\,\dint x <\infty
\end{align*}
so that condition \eqref{eq:MBedingung} is satisfied. We have $\Lambda(t) = \Psi(\|t\|_2)$, where $\Psi:[0,\infty)\to\R$ is given by
$$
\Psi(s) = \E\Big[\log{\frac{\sinh (sg)}{sg}}\Big].
$$
Again, the rate function $\Lambda^*$ appearing in Theorem~\ref{thm:as LDP} is given by $\Lambda^*(t) = \Psi^*(\|t\|_2)$.
Let us check that, as in the discrete case, $\Psi^*(u)= + \infty$ if $u > \sqrt{2/\pi}$. Indeed, we have
\begin{align*}
\Psi(s) = \E\Big[\log{e^{sg}-e^{-sg}\over 2sg}\Big] &= -\log 2-\log s-\E\log|g|+\E[|sg|]+\E[\log(1-e^{-2|sg|})]\\
&=-{1\over 2}(\log 2+\gamma)-\log s+\sqrt{2\over\pi}\,s+\E[\log(1-e^{-2|sg|})]\\
&=-{1\over 2}(\log 2+\gamma)-\log s+\sqrt{2\over\pi}\, s+o(1)\,,
\end{align*}
as $s\to \infty$, where $\gamma$ is the Euler-Mascheroni constant. By the same reasoning as in the previous section it follows that $\Psi^*(u)= + \infty$ whenever $u > \sqrt{2/\pi}$. On the other hand, since the expression for $\Psi(s)$ contains the additional term $-\log s$, we have that $\Psi^*(\sqrt{2/\pi})=+\infty$ in contrast to the discrete case.

\section{Preliminaries}\label{sec:prelim}

\subsection{General notation}

We start by introducing some notation that is used throughout the paper. We fix a space dimension $d\geq 1$ and denote by $\|\,\cdot\,\|_2$ the Euclidean norm and by $\langle\,\cdot\,,\,\cdot\,\rangle_2$ the Euclidean scalar product in $\R^d$. For $x\in\R^d$ and $r>0$ we denote by $\B_2^d(x,r)$ the Euclidean ball of radius $r$ centred at $x$, and put $\B_2^d:=\B_2^d(0,1)$. Similarly, we denote by $\|\,\cdot\,\|_\infty$ the maximum norm on $\R^d$ given by $\|x\|_\infty=\max\{|x_1|,\ldots,|x_d|\}$ for $x=(x_1,\ldots,x_d)\in\R^d$.  For a subset $A$ of a topological space we write $A^\circ$ and $\overline{A}$ for the interior and the closure of $A$, respectively.

By $(\Omega,\mathcal{F},\Pro)$ we denote our underlying probability space, which is implicitly assumed to be rich enough to carry all the random objects we deal with in this paper. By $\E$ we denote expectation (that is, integration) with respect to $\Pro$. If more than one random object $X$ is involved we shall use the notation $\E_X$ to indicate that the expectation is taken only with respect to $X$. By $\mathcal{N}(0,1)$ we denote the standard Gaussian distribution on $\R$ and write $g\sim\mathcal{N}(0,1)$ to say that the random variable $g$ has distribution $\mathcal{N}(0,1)$. In addition, if two random variables $X$ and $Y$ have the same distribution we indicate this by writing $X\overset{\dint}{=}Y$. The almost sure convergence of a sequence of random elements $X_n$ to another random element $X$ is indicated by $X_n\toas X$.

The group of orthogonal $d\times d$ matrices will be denoted by $\mathcal{O}(d)$ and we shall write $A^*$ for the adjoint of a matrix $A$. Also, by $\id_{d\times d}$ we denote by $d\times d$ identity matrix and by $\id_{\R^d}:\R^d\to\R^d$ the identity map. For a linear map $T:\R^d\to\R^n$ (for some $n\in\N$) we let $T^*$ be the adjoint operator satisfying $\langle Tx,y\rangle_2=\langle x,T^*y\rangle_2$ for all $x\in\R^d$ and $y\in\R^n$. Moreover, we denote by $\|T\|_{\rm op}$ the operator norm of $T$, which is given by $\|T\|_{\rm op}:=\sup\{\|T(x)\|_2:\|x\|_2\leq 1\}$.

For $a<b$ we denote by $\mathscr{C}[a,b]$ for the space of continuous functions $f:[a,b]\to\R$ endowed with the supremum norm $\|f\|_\infty:=\sup\{|f(x)|:x\in[a,b]\}$.

\subsection{Stiefel Manifolds}

For $n,d\in\N$ with $d\leq n$, the Stiefel manifold $\mathbb V_{n,d}$ (over $\R$) is defined as the set of all orthonormal $d$-frames in $\R^n$, i.e., the set of all ordered $d$-tuples of orthonormal vectors in Euclidean space $\R^n$. Alternatively, the Stiefel manifold can be thought of as the set of $n\times d$ matrices and a $d$-frame $u_1,\ldots,u_d$ is represented as a matrix with the $d$ columns $u_1,\ldots,u_d$. Formally, this means that
\[
\mathbb V_{n,d} = \big\{ M \in\R^{n\times d}\,:\, A^*A = \id_{d\times d}  \big\}.
\]
We denote by $\mu_{n,d}$ the Haar probability measure on the Stiefel manifold $\mathbb V_{n,d}$, i.e., the unique probability measure on $\mathbb V_{n,d}$ which is invariant under the {two-sided action of the product of the orthogonal groups $\mathcal{O}(n)\times \mathcal O(d)$.} We frequently refer to this measure as the uniform distribution on the Stiefel manifold. So, if $A$ is a random matrix uniformly distributed on $\mathbb V_{n,d}$, then $UAV\stackrel{\dint}{=} A$ for every $U\in\mathcal O(n)$ and $V\in\mathcal O(d)$. The next result shows how to generate a uniform random element in $\mathbb{V}_{n,d}$ using Gaussian random matrices. We include the proof for the sake of completeness.

\begin{lemma}\label{lem:uniform distribution Stiefel manifold}
Let $d,n\in\N$ and assume that $d \leq n$. Consider a Gaussian random matrix $G=(g_{ij})_{i,j=1}^{d,n}:\R^n\to\R^d$ with independent standard normal entries. Then the random matrix $I = G^*(GG^*)^{-1/2}:\R^d\to\R^n$ is uniformly distributed on the Stiefel manifold $\mathbb V_{n,d}$.
\end{lemma}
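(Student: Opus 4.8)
The plan is to verify two things: first, that $I = G^*(GG^*)^{-1/2}$ is almost surely well-defined and actually lands in the Stiefel manifold; second, that its distribution is invariant under the left action of $\mathcal O(n)$, which by the characterization of Haar measure pins it down as $\mu_{n,d}$. For the first point, observe that $G$ is a Gaussian $(d\times n)$-matrix with $d\le n$, so almost surely $G$ has full rank $d$; consequently $GG^*$ is an almost surely positive-definite symmetric $(d\times d)$-matrix, and its inverse square root $(GG^*)^{-1/2}$ is well-defined (via the spectral theorem) on a set of probability $1$. On that set, I would compute directly
\[
I^*I = (GG^*)^{-1/2}G\,G^*(GG^*)^{-1/2} = (GG^*)^{-1/2}(GG^*)(GG^*)^{-1/2} = \id_{d\times d},
\]
so the columns of $I$ form an orthonormal $d$-frame, i.e. $I\in\mathbb V_{n,d}$ almost surely.

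For the second point, the key structural fact is the rotational invariance of the Gaussian matrix: for any $U\in\mathcal O(n)$, the matrix $GU$ has the same distribution as $G$, because the entries of $G$ are i.i.d. standard Gaussians and right-multiplication by an orthogonal matrix is an orthogonal transformation of each row (equivalently of $\R^{dn}$, preserving the standard Gaussian law). I would then write $\tilde G := GU$ and check that the map $G\mapsto I(G) = G^*(GG^*)^{-1/2}$ is equivariant in the sense that
\[
I(GU) = (GU)^*\big((GU)(GU)^*\big)^{-1/2} = U^*G^*\big(G U U^* G^*\big)^{-1/2} = U^*G^*(GG^*)^{-1/2} = U^* I(G),
\]
using $UU^* = \id_{n\times n}$. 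Hence $I(GU) \overset{\dint}{=} I(G)$ on one hand, and $I(GU) = U^*I(G)$ on the other; combining these gives $U^*I(G) \overset{\dint}{=} I(G)$ for every $U\in\mathcal O(n)$, i.e. the distribution of $I$ is invariant under all left rotations.

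To conclude, I would invoke the fact that the Haar measure $\mu_{n,d}$ is the unique probability measure on $\mathbb V_{n,d}$ invariant under the left $\mathcal O(n)$-action — transitivity of this action on $\mathbb V_{n,d}$ (any orthonormal $d$-frame can be rotated into any other) forces uniqueness, a standard fact for homogeneous spaces under a compact group. Since we have shown the law of $I$ is supported on $\mathbb V_{n,d}$ and is left-$\mathcal O(n)$-invariant, it must equal $\mu_{n,d}$. I expect no serious obstacle here; the only point requiring a little care is the almost-sure invertibility of $GG^*$ (so that $I$ is genuinely defined $\Pro$-a.s.), and the bookkeeping that $(GUU^*G^*)^{-1/2} = (GG^*)^{-1/2}$ really does follow from $UU^*=\id$ together with uniqueness of the positive-definite square root. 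One could alternatively note that left $\mathcal O(n)$-invariance already determines the distribution of the column span (Haar on the Grassmannian) and of the frame within it, but the direct uniqueness argument is cleaner.
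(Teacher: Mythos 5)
Your proof is correct, and it takes a genuinely leaner route than the paper's. The paper establishes full two-sided invariance: it defines $\widetilde G := V^*GU^*$ for arbitrary $U\in\mathcal O(n)$, $V\in\mathcal O(d)$, checks $\widetilde G \overset{\dint}{=} G$, and then shows via the uniqueness of positive-definite square roots that $\widetilde I = UIV$, concluding $UIV \overset{\dint}{=} I$. This directly matches the paper's stated characterization of $\mu_{n,d}$ as the unique probability measure invariant under the $\mathcal O(n)\times\mathcal O(d)$ action. You instead establish only left $\mathcal O(n)$-invariance (via $GU \overset{\dint}{=} G$ and the equivariance $I(GU) = U^*I(G)$, which bypasses the square-root bookkeeping entirely because $UU^* = \id$ collapses inside the argument of $(\cdot)^{-1/2}$ without touching the self-adjointness argument). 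You then close with the fact that left $\mathcal O(n)$-invariance alone already determines $\mu_{n,d}$, since $\mathbb V_{n,d}\cong \mathcal O(n)/\mathcal O(n-d)$ is a homogeneous space under a transitive compact group action and hence carries a unique invariant probability measure. Both characterizations pick out the same measure, so your argument is complete. Your version is shorter and avoids the computation $(V^*(GG^*)^{1/2}(V^*)^{-1}) = (V^*GG^*V)^{1/2}$ that the paper needs; the trade-off is that you must invoke the (standard, but slightly less elementary) one-sided uniqueness statement rather than the two-sided one the paper takes as its definition. You also explicitly verify $I^*I = \id_{d\times d}$, i.e.\ that $I$ really lands in $\mathbb V_{n,d}$ almost surely, a point the paper leaves implicit; that is a welcome addition.
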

\begin{rmk}
The polar decomposition (see, e.g., \cite[Theorem 3.5]{EG2015}) states that for any linear operator $T:\R^n\to\R^d$, $n\geq d$ there exists a linear isometry $J :\R^d\to\R^n$ (i.e., an isometric embedding, or, equivalently, an isometric isomorphism onto its image) such that
\[
T^* \,=\, J (TT^*)^{1/2}.
\]
In our setting, taking $T=G$, $I$ is the isometric embedding associated with the operator $G$. 
\end{rmk}
\begin{proof}[Proof of Lemma~\ref{lem:uniform distribution Stiefel manifold}.]
We show that $I$ is uniformly distributed on the Stiefel manifold $\mathbb V_{n,d}$. In order to do this, we prove left and right orthogonal invariance, i.e., we show that for all $U\in\mathcal O(n)$ and $V\in \mathcal O(d)$,
\[
U I V \stackrel{\dint}{=} I\,.
\]
For $U\in\mathcal O(n)$ and $V\in \mathcal O(d)$, we define a random linear operator
\[
\widetilde G := V^* G U^* :\R^n \to\R^d.
\]
{By computing the covariances between the entries of $\widetilde G$, one easily checks that  $\widetilde G \stackrel{\dint}{=} G$.} For the random linear operator $\widetilde I := \widetilde G^*(\widetilde G\widetilde G^*)^{-1/2}$, we obtain
\[
\widetilde I = U G^* V (V^* GU^*UG^* V)^{-1/2} = U G^* V (V^* GG^* V)^{-1/2}\,.
\]
Now, if $A:= (GG^*)^{1/2}$, meaning that $A=A^*$ is positive semi-definite and $A^2= GG^*$, then
\[
V^*A(V^*)^{-1} = (V^*GG^*V)^{1/2},
\]
because $(V^*A(V^*)^{-1})^* = V^*A(V^*)^{-1}$ is positive semi-definite and
$$
(V^*A(V^*)^{-1})(V^*A(V^*)^{-1}) = V^*A^2(V^*)^{-1}=V^*GG^*V.
$$
Therefore,
\begin{align*}
\widetilde I  = UG^*V \big(V^*(GG^*)^{1/2}(V^*)^{-1}\big)^{-1}
  = UG^*VV^*(GG^*)^{-1/2}V
  = UG^*(GG^*)^{-1/2}V = U I V.
\end{align*}
Since $G\stackrel{\dint}{=}\widetilde G$, we also have $\widetilde I \stackrel{\dint}{=} I$. Therefore, we conclude that
\[
U I V \stackrel{\dint}{=} I,
\]
which shows the left and right invariance. We conclude that $I$ is uniformly distributed on the Stiefel manifold $\mathbb V_{n,d}$.
\end{proof}

\subsection{Strong law of large numbers in Banach spaces}

The following result is the strong law of large numbers for random elements taking values in a separable Banach space. It extends the classical strong law of large numbers for real-valued random variables of Kolmogorov to the Banach space set-up. The result can be found, for instance, in the monograph \cite[Corollary 7.10]{LT2011}. 

\begin{proposition}[SLLN in Banach spaces]\label{prop:SLLN BS}
Let $X$ be a random variable taking values in a separable Banach space $(F,\|\,\cdot\,\|_F)$ and $(X_n)_{n\in\N}$ a sequence of independent copies of $X$. If and only if $\E[\|X\|_F] < \infty$ one has that
\[
{1\over n}\sum_{i=1}^n X_i\toas\E[X].
\]
\end{proposition}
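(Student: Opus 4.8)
The plan is to prove the two implications separately. The easy direction, ``a.s.\ convergence $\Rightarrow$ integrability,'' goes by Borel--Cantelli: assuming $\frac1n\sum_{i=1}^n X_i\toas\E[X]$, one writes $X_n/n$ as the difference of two Ces\`aro averages to get $\|X_n\|/n\toas 0$; then for each $\eps>0$ the events $\{\|X_n\|>\eps n\}$ occur only finitely often a.s., and since they are independent the second Borel--Cantelli lemma forces $\sum_{n\ge1}\Pro(\|X\|>\eps n)<\infty$; comparing this with $\E[\|X\|]=\int_0^\infty\Pro(\|X\|>t)\dint t$ gives $\E[\|X\|]<\infty$. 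This part is routine.

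For the substantive direction, ``integrability $\Rightarrow$ a.s.\ convergence'' (Mourier's theorem), I would reduce to the scalar Kolmogorov SLLN by finite-dimensional approximation. Assume $\E[\|X\|]<\infty$. Since $F$ is separable, $X$ is strongly measurable and Bochner integrable, so for any $\eps>0$ there is a \emph{simple} random variable $Y$ --- which may be taken of the form $Y=f(X)$ for a Borel map $f$ with finite range --- with $\E[\|X-Y\|]<\eps$. Put $Z:=X-Y$, so that $(Y_i):=(f(X_i))$ and $(Z_i):=(X_i-f(X_i))$ are sequences of i.i.d.\ copies of $Y$ and $Z$. The $Y_i$ all take values in the finite-dimensional span of the range of $f$, so the ordinary SLLN, applied coordinatewise there, gives $\frac1n\sum_{i=1}^n Y_i\toas\E[Y]$; for the remainder I would use only the triangle inequality and the scalar SLLN for the i.i.d.\ nonnegative reals $\|Z_i\|$,
\[
\limsup_{n\to\infty}\Big\|\tfrac1n\sum_{i=1}^n Z_i\Big\|\;\le\;\limsup_{n\to\infty}\tfrac1n\sum_{i=1}^n\|Z_i\|\;=\;\E[\|Z\|]\;<\;\eps\qquad\text{a.s.}
\]
Combined with $\|\E[Y]-\E[X]\|\le\E[\|Z\|]<\eps$, this yields $\limsup_{n\to\infty}\big\|\frac1n\sum_{i=1}^n X_i-\E[X]\big\|<2\eps$ on an event of probability $1$.

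It remains to remove the dependence of the exceptional null set on $\eps$: I would apply the previous bound with $\eps=1/k$ for each $k\in\N$ and intersect the resulting full-probability events, on which $\limsup_n\big\|\frac1n\sum_{i=1}^n X_i-\E[X]\big\|=0$. The main obstacle is the sufficiency direction, and within it the one genuinely infinite-dimensional step --- approximating $X$ in mean by a finitely valued random variable, which is exactly where separability of $F$ enters. Once that reduction is available, the proof runs entirely on the classical real-variable SLLN, applied in a finite-dimensional subspace for $(Y_i)$ and to the scalars $\|Z_i\|$ for the error term.
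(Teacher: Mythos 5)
The paper does not prove this proposition; it is quoted verbatim from Ledoux and Talagrand, \cite[Corollary~7.10]{LT2011}, so there is no internal proof to compare against line by line. Your argument is nonetheless a correct and complete proof of Mourier's strong law, and it is the classical elementary one. The necessity direction via $\|X_n\|_F/n\toas 0$ and the second Borel--Cantelli lemma (legitimate here precisely because the events $\{\|X_n\|_F>\eps n\}$ are independent) is standard and sound. For sufficiency, all the ingredients check out: separability of $F$ together with $\E\|X\|_F<\infty$ gives strong measurability and Bochner integrability, whence a Borel map $f$ of finite range with $\E\|X-f(X)\|_F<\eps$; the approximants $Y_i=f(X_i)$ are i.i.d.\ in a finite-dimensional subspace, where Kolmogorov's SLLN applies coordinatewise; the error term is handled by applying the scalar SLLN to the nonnegative i.i.d.\ reals $\|Z_i\|_F$, giving $\limsup_n\big\|n^{-1}\sum_{i\le n}Z_i\big\|_F\le\E\|Z\|_F<\eps$ almost surely; and taking $\eps=1/k$ and intersecting the resulting full-probability events removes the $\eps$-dependence of the null set. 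The only remark worth adding is one of context: the Ledoux--Talagrand treatment that the paper cites derives this corollary from a more general machinery for sums of independent Banach-valued random variables (L\'evy's inequalities, symmetrization, the It\^o--Nisio theorem), whereas your direct reduction to the scalar Kolmogorov SLLN is more self-contained and, for this particular statement, arguably the cleaner route.
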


We remind the reader that in the previous proposition $\E[X]=\int_F X\,\dint\Pro$ stands for the Pettis integral of the $F$-valued random variable $X$, which we think of being defined on the probability space $(\Omega,\mathcal{F},\Pro)$.

\subsection{Random measures}

Consider a Polish space $S$. We denote by $\mathcal M_1(S)$ the space of (Borel) probability measures on $S$ and supply $\mathcal M_1(S)$ with the $\sigma$-algebra $\mathscr B(\mathcal M_1(S))$ generated by the evaluation mappings $\varepsilon_B: \mu\mapsto \mu(B)$, where $\mu\in \mathcal M_1(S)$ and $B\in\mathscr B(S)$, the Borel $\sigma$-algebra on $S$. This means that $\mathscr B(\mathcal M_1(S))$ is the smallest $\sigma$-algebra for which all the mappings $\varepsilon_B$ become measurable. {Endowed} with the weak topology, the space $\mathcal M_1(S)$ is a Polish space and $\mathscr B(\mathcal M_1(S))$ coincides with the Borel-$\sigma$ generated by the weak topology (see, e.g., \cite{KallenbergRM}). A random measure $\nu$ on $S$ is a random element in the measurable space $(\mathcal M_1(S),\mathscr B(\mathcal M_1(S)))$, i.e., a measurable mapping $\nu:\Omega \to \mathcal M_1(S)$, where we recall that $(\Omega,\mathcal{F},\Pro)$ is our underlying probability sapce.

\subsection{Large deviations and the G\"artner-Ellis theorem}

Let us briefly present the necessary background material from the theory of large deviations, which may be found in \cite{DZ, dH, DS1989}, for example. We start with the definition of full and weak large deviations principles.

\begin{df}[Full and weak LDP]\label{def:LDP}
Let $(\mu_n)_{n\in\N}$ be a sequence of probability measures on $\R^d$, $(s_n)_{n\in\N}$ a positive sequence such that $s_n\uparrow +\infty$, and $\rate:\R^d\to[0,+\infty]$ a lower semi-continuous function.
We say that $(\mu_n)_{n\in\N}$ satisfies a full large deviations principle (full LDP) with speed $s_n$ and rate function $\rate$ if
\begin{equation}\label{eq:LDPdefinition}
\begin{split}
-\inf_{x\in A^\circ}\rate(x)
\leq\liminf_{n\to\infty}{1\over s_n}\log \mu_n(A)
\leq\limsup_{n\to\infty}{1\over s_n}\log\mu_n(A) \leq -\inf_{x\in\overline{A}}\rate(x)
\end{split}
\end{equation}
for all Borel sets $A\subseteq \R^d$. The rate function $\rate$ is called good if its level sets  $\{x\in \R^d\,:\, \rate(x) \leq \alpha \}$ are compact for all $\alpha\geq 0$.

We say that $(\mu_n)_{n\in\N}$ satisfies a weak LDP with speed $s_n$ and rate function $\rate$ if the upper bound in \eqref{eq:LDPdefinition} is valid only for compact sets $A\subseteq \R^d$.
\end{df}

We note that \eqref{eq:LDPdefinition} is equivalent to the following two conditions:
\begin{itemize}
\item[(i)] $-\inf\limits_{x\in U}\rate(x) \leq \liminf\limits_{n\to\infty}\frac{1}{n}\log \mu_n(U)$ for all open sets $U\subseteq\R^d$,
\item[(ii)] $\limsup\limits_{n\to\infty}\frac{1}{n}\log \mu_n(C) \leq -\inf\limits_{x\in C}\rate(x)$ for all closed sets $C\subseteq\R^d$.
\end{itemize}

The notions of weak and full LDPs are separated by the concept of exponential tightness of the sequence of probability measures (see, e.g., \cite[Lemma 1.2.18]{DZ} and \cite[Lemma 27.9]{Kallenberg}).

\begin{proposition}\label{prop:equivalence weak and full LDP}
Let $(\mu_n)_{n\in\N}$ be a sequence of probability measures on $\R^d$ satisfying a weak LDP with speed $s_n$ and rate function $\rate$. Then $(\mu_n)_{n\in\N}$ satisfies a full LDP with good rate function $\rate$ if and only if the sequence is exponentially tight, i.e., if and only if for every $M\in(0,\infty)$ there exists a compact set $K\subseteq \R^d$ such that
$$
\limsup_{n\to\infty}{1\over s_n}\log \mu_n(\R^n\setminus K)\leq -M\,.
$$
\end{proposition}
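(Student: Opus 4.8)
The plan is to prove the two implications separately, treating the weak LDP with speed $s_n$ and rate function $\rate$ as a standing hypothesis throughout. The only soft analytic ingredient I will need is the elementary observation that for non-negative real sequences $(a_n)_n$ and $(b_n)_n$ one has
\[
\limsup_{n\to\infty}\tfrac{1}{s_n}\log(a_n+b_n)=\max\Big\{\limsup_{n\to\infty}\tfrac{1}{s_n}\log a_n,\ \limsup_{n\to\infty}\tfrac{1}{s_n}\log b_n\Big\},
\]
which follows from $\max\{a_n,b_n\}\le a_n+b_n\le 2\max\{a_n,b_n\}$ together with $s_n\uparrow+\infty$. This ``sum behaves like max on the exponential scale'' fact will be applied to split a probability into a part supported on a compact set and a tail.

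For the easy direction I would assume the full LDP with good rate function $\rate$ and deduce exponential tightness. Fix $M\in(0,\infty)$. Goodness makes $\{\rate\le M\}$ compact, hence bounded, so $\{\rate\le M\}\subseteq\B_2^d(0,R)$ for some $R>0$; equivalently $\rate(x)>M$ whenever $\|x\|_2\ge R$. Taking $K:=\overline{\B_2^d(0,R)}$, which is compact, and using that $\R^d\setminus K\subseteq\{x:\|x\|_2\ge R\}$ is closed, the full LDP upper bound gives $\limsup_{n\to\infty}\tfrac{1}{s_n}\log\mu_n(\R^d\setminus K)\le-\inf_{\|x\|_2\ge R}\rate(x)\le-M$, which is exactly exponential tightness.

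For the substantive direction I would assume the weak LDP together with exponential tightness; the task is to (a) upgrade the upper bound from compact to arbitrary closed sets and (b) show $\rate$ is good. For (a), let $C\subseteq\R^d$ be closed and fix $M\in(0,\infty)$; by exponential tightness choose a compact $K$ with $\limsup_{n\to\infty}\tfrac{1}{s_n}\log\mu_n(\R^d\setminus K)\le-M$. Then $\mu_n(C)\le\mu_n(C\cap K)+\mu_n(\R^d\setminus K)$, and since $C\cap K$ is compact the weak LDP upper bound applies to it; combining this with the ``max'' fact above yields $\limsup_{n\to\infty}\tfrac{1}{s_n}\log\mu_n(C)\le\max\{-\inf_{C\cap K}\rate,\,-M\}\le\max\{-\inf_{C}\rate,\,-M\}$, and letting $M\to\infty$ gives the upper bound for closed sets; together with the open-set lower bound, which is untouched, this is the full LDP. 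For (b), fix $\alpha\ge0$, pick $M>\alpha$, and choose a compact $K$ as above. If $x\notin K$, then $\R^d\setminus K$ is open and contains $x$, so the weak LDP lower bound gives $-\rate(x)\le-\inf_{\R^d\setminus K}\rate\le\liminf_{n\to\infty}\tfrac{1}{s_n}\log\mu_n(\R^d\setminus K)\le-M$, i.e. $\rate(x)\ge M>\alpha$. Hence $\{\rate\le\alpha\}\subseteq K$, and being closed (by lower semi-continuity of $\rate$) it is compact, so $\rate$ is good.

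I expect no genuine obstacle here -- the statement is the classical reformulation in the spirit of \cite[Lemma 1.2.18]{DZ} -- but the one place where the two hypotheses truly interact, and hence the step to write out most carefully, is part (b): exponential tightness is converted into compactness of the level sets of $\rate$ precisely through the open-set lower bound of the weak LDP.
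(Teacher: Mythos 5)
Your proof is correct and complete. The paper does not prove this proposition itself but cites \cite[Lemma 1.2.18]{DZ} and \cite[Lemma 27.9]{Kallenberg}; your argument (splitting $\mu_n(C)\le\mu_n(C\cap K)+\mu_n(\R^d\setminus K)$ and using the exponential ``sum behaves like max'' fact for the upper bound, and using the open-set lower bound on $\R^d\setminus K$ to trap the level sets $\{\rate\le\alpha\}$ inside a compact $K$ for goodness) is precisely the standard proof one finds in those references.
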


In this manuscript, we shall prove an almost sure version of a large deviations principle.
{In this setting, the probability measures $\mu_n$ in Definition \ref{def:LDP} are random and we require that \eqref{eq:LDPdefinition} (or the two equivalent conditions (i) and (ii) above) hold for almost all realizations of the sequence $(\mu_n)_{n\in\N}$.}

We shall use a special case of the G\"artner-Ellis theorem in our argument. To rephrase it, assume that $(\mu_n)_{n\in\N}$ is a sequence of probability measures on $\R^d$ with moment generating functions
\[
\varphi_n(t) := \int_{\R^d} e^{\langle x,t\rangle_2}\,\mu_n(\dint x),\qquad t\in \R^d,\,\,n\in\N.
\]
The Legendre-Fenchel transform of a function $F:\R^d\to\R$ is denoted by $F^*$ and is given by $F^*(x)=\sup\{\langle x,t\rangle_2-F(t):t\in\R^d\}$.

\begin{proposition}[G\"artner-Ellis theorem]\label{prop: gaertner-ellis thm}
Let $(\mu_n)_{n\in\N}$ be a sequence of probability measures on $\R^d$ with moment generating functions $\varphi_n$, $n\in\N$.
Assume that the limit
\[
\Lambda(t) := \lim_{n\to\infty} \frac{1}{n} \log \varphi_n(nt)
\]
exists finitely for all $t\in\R^d$  and that $\Lambda$ is differentiable on $\R^d$. Then $(\mu_n)_{n\in\N}$ satisfies a large deviations principle on $\R^d$ with good rate function $\Lambda^*$.
\end{proposition}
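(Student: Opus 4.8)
The plan is to prove the G\"artner--Ellis theorem in this (convenient) case where $\Lambda$ is finite and differentiable on all of $\R^d$, following the classical Chebyshev-bound-plus-change-of-measure argument. First I would record the structural facts that drive everything: each map $t\mapsto\frac1n\log\varphi_n(nt)$ is convex by H\"older's inequality, so the pointwise limit $\Lambda$ is convex; being finite on $\R^d$ it is continuous, and together with differentiability it is in fact $C^1$. Also $\Lambda(0)=\lim_n\frac1n\log 1=0$, whence $\Lambda^*\ge 0$, and $\Lambda^*$ is convex, proper (its domain contains $\nabla\Lambda(0)$, where $\Lambda^*$ vanishes) and lower semi-continuous as a Legendre--Fenchel transform. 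The one elementary fact used repeatedly is that at an \emph{exposed point} $x=\nabla\Lambda(t)$ the supremum defining $\Lambda^*(x)$ is attained at $t$, so that $\Lambda^*(\nabla\Lambda(t))=\langle t,\nabla\Lambda(t)\rangle_2-\Lambda(t)$; this is immediate from the subgradient inequality for the convex function $\Lambda$.

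\emph{Upper bound, exponential tightness, goodness.} From the exponential Chebyshev inequality $\mu_n(\{y:\langle t,y\rangle_2\ge a\})\le e^{-na}\varphi_n(nt)$ one gets, for every $t$ and $a$, that $\limsup_n\frac1n\log\mu_n(\{y:\langle t,y\rangle_2\ge a\})\le\Lambda(t)-a$. Used in the coordinate directions $t=\pm e_j$, this shows complements of large cubes are exponentially negligible, i.e.\ $(\mu_n)$ is exponentially tight (only the finiteness of $\Lambda$ at $\pm e_j$ is needed here). Used around an arbitrary point $x$, together with a finite subcover, it yields $\limsup_n\frac1n\log\mu_n(K)\le-\inf_{x\in K}\Lambda^*(x)$ for every compact $K$, and exponential tightness upgrades this to all closed sets. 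Finally $\Lambda^*(x)\ge R\|x\|_2-\max_{\|t\|_2=R}\Lambda(t)$ for every $R>0$ by finiteness of $\Lambda$, so the sublevel sets of $\Lambda^*$ are bounded, hence (being closed by lower semi-continuity) compact; thus $\Lambda^*$ is a good rate function.

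\emph{Lower bound at exposed points.} For an open set $U$ and a point $x=\nabla\Lambda(t)\in U$, I would tilt: set $\tilde\mu_n(\dint y):=\varphi_n(nt)^{-1}e^{n\langle t,y\rangle_2}\mu_n(\dint y)$, a probability measure whose normalised log-moment generating functions converge to $\tilde\Lambda(s):=\Lambda(s+t)-\Lambda(t)$, again finite, convex and differentiable, with $\tilde\Lambda(0)=0$ and $\nabla\tilde\Lambda(0)=x$. Applying the upper bound already proven (which used only finiteness) to $(\tilde\mu_n)$ on the closed set $\{\|y-x\|_2\ge\delta\}$ shows $\tilde\mu_n(\B_2^d(x,\delta))\to 1$, provided $\inf_{\|y-x\|_2\ge\delta}\tilde\Lambda^*(y)>0$; this positivity holds because $\tilde\Lambda^*$ is good, nonnegative, and---this is where differentiability of $\Lambda$ is essential---vanishes \emph{only} at $x$ (if $\tilde\Lambda^*(y_0)=0$ for some $y_0\ne x$, convexity would force $\langle y_0,s\rangle_2\le\tilde\Lambda(s)$ for all $s$, contradicting $\tilde\Lambda(\eps(y_0-x))=\eps\langle x,y_0-x\rangle_2+o(\eps)$ as $\eps\downarrow0$). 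Undoing the tilt, $\mu_n(U)\ge\mu_n(\B_2^d(x,\delta))\ge\varphi_n(nt)\,e^{-n\langle t,x\rangle_2-n\|t\|_2\delta}\,\tilde\mu_n(\B_2^d(x,\delta))$; taking $\frac1n\log$, then $n\to\infty$, then $\delta\downarrow0$ gives $\liminf_n\frac1n\log\mu_n(U)\ge\Lambda(t)-\langle t,x\rangle_2=-\Lambda^*(x)$, the lower bound at every exposed point of $U$.

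\emph{Extension to all of $U$, and the main obstacle.} To reach an arbitrary $x\in U$ with $\Lambda^*(x)<\infty$, I would fix $x_0\in\nabla\Lambda(\R^d)$ lying in the relative interior of $\{\Lambda^*<\infty\}$ (such $x_0$ exists: the subdifferential of $\Lambda^*$ is nonempty on that relative interior, and any subgradient $t_0$ there satisfies $x_0=\nabla\Lambda(t_0)$), and approach $x$ along $x_\theta:=(1-\theta)x+\theta x_0$, which lies in $U\cap\nabla\Lambda(\R^d)$ for small $\theta>0$; convexity gives $\Lambda^*(x_\theta)\le(1-\theta)\Lambda^*(x)+\theta\Lambda^*(x_0)$, so letting $\theta\downarrow0$ in the bound at $x_\theta$ yields $\liminf_n\frac1n\log\mu_n(U)\ge-\Lambda^*(x)$; taking the supremum over $x\in U$ completes the lower bound, and with the upper bound and goodness this is the asserted LDP. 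The delicate step is the lower bound, and within it the claim that the tilted rate function $\tilde\Lambda^*$ has $x=\nabla\tilde\Lambda(0)$ as its \emph{unique} zero: this is precisely where the differentiability hypothesis on $\Lambda$ enters, and without it the tilted measures need not concentrate and the lower bound genuinely fails. The surrounding convex-analytic bookkeeping---identifying $\nabla\Lambda(\R^d)$ with the set where $\partial\Lambda^*$ is nonempty, the behaviour of segments relative to the relative interior of $\{\Lambda^*<\infty\}$, and the attainment-at-an-exposed-point identity---is standard but must be invoked with some care.
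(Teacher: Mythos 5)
The paper does not prove this proposition at all: it is quoted as a known special case of the G\"artner--Ellis theorem, with a pointer to the standard references (Dembo--Zeitouni, den Hollander, Deuschel--Stroock), so there is no in-paper argument to compare yours against. Your proposal is, in substance, the classical textbook proof correctly specialized to the situation the paper needs, namely $\Lambda$ finite and differentiable on all of $\R^d$: exponential Chebyshev plus a covering argument gives the upper bound on compacts, finiteness of $\Lambda$ gives exponential tightness (hence the upper bound for closed sets) and the linear lower bound $\Lambda^*(x)\ge R\|x\|_2-\max_{\|t\|_2=R}\Lambda(t)$ gives goodness; the lower bound is obtained by exponential tilting, and your argument that the tilted rate function $\tilde\Lambda^*$ vanishes only at $x=\nabla\Lambda(t)$ (via $\tilde\Lambda(\eps(y_0-x))=\eps\langle x,y_0-x\rangle_2+o(\eps)$) is exactly the point where differentiability is used, replacing the ``exposed point'' bookkeeping of the general theorem. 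The passage from points of $\nabla\Lambda(\R^d)$ to arbitrary points of $\{\Lambda^*<\infty\}$ via a segment into the relative interior of the domain is Rockafellar's lemma in disguise; the facts you invoke (nonemptiness of $\partial\Lambda^*$ on the relative interior, the equivalence $t_0\in\partial\Lambda^*(x_0)\Leftrightarrow x_0\in\partial\Lambda(t_0)=\{\nabla\Lambda(t_0)\}$ since $\Lambda=\Lambda^{**}$, and the line-segment principle) are standard and correctly deployed, though stated in compressed form. The only caveats are cosmetic: you should note that $\varphi_n(nt)$ may be infinite for small $n$ (finiteness of the limit only guarantees finiteness for large $n$, which suffices for all the $\limsup$/$\liminf$ statements and for defining the tilted measures eventually), and the compact-set upper bound needs the usual $\min(\Lambda^*(q)-\delta,1/\delta)$ truncation to handle $\Lambda^*(q)=\infty$; neither affects correctness. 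Working in the everywhere-finite case is precisely what lets you avoid the steepness/essential-smoothness issues at the boundary of the domain of $\Lambda$, which is why this streamlined version is legitimate here.
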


\section{The case of Gaussian projections}\label{sec:gaussian setting}

Passing between Gaussian random matrices and random orthogonal projections in the Grassmannian sense has demonstrated to be useful in a variety of contexts as can be seen, for example, in \cite{AS1992, BV1994, BH1999, DT2010, MTJ2001, PPZ2014} and references cited therein. This will also be the spirit in our work, starting with the results for `Gaussian projections' followed by a transition step that allows us to go over to random orthogonal projections. Of course, such a transition requires a careful analysis.

Let us briefly describe the Gaussian setting. In what follows, $\nu$ will be a probability measure on $\R$ with everywhere finite moment generating function $M_\nu$ satisfying \eqref{eq:MBedingung} for all $\varepsilon>0$. For $n\in\N$ we define the random probability measure
\[
\mu^G_{\otimes n} := \nu^{\otimes n} \circ \Big(\frac{1}{n}G\Big)^{-1}
\]
on $\R^d$, where $G=(g_{ij})_{i,j=1}^{d,n}:\R^n\to\R^d$ is a random Gaussian matrix with independent standard Gaussian entries. Our next result is an almost sure LDP for the sequence of random probability measures $\mu^G_{\otimes n}$. It is the first step in the proof of our main result, Theorem \ref{thm:as LDP}.

\begin{proposition}[LDP in the Gaussian setting]\label{prop:ldp gaussian setting}
The sequence $(\mu^G_{\otimes n})_{n\in\N}$ of random probability measures on $\R^d$ satisfies an almost sure LDP with speed $n$ and good rate function $\Lambda^*$ given as the Legendre-Fenchel transform of
\[
\Lambda:\R^d \to\R,\quad t\mapsto \E\big[\log M_{\nu}(\|t\|_2\,g)\big],
\]
where $g\sim\mathcal N(0,1)$ is a standard Gaussian random variable.
\end{proposition}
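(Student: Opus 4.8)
The plan is to apply the G\"artner--Ellis theorem (Proposition~\ref{prop: gaertner-ellis thm}) conditionally on the random matrix $G$, and then argue that the limiting cumulant generating function is almost surely equal to the deterministic function $\Lambda$ above. First I would write down, for a fixed realization of $G$, the moment generating function of $\mu^G_{\otimes n}$: for $t\in\R^d$,
\[
\varphi_n^G(nt) = \int_{\R^d} e^{\langle x, nt\rangle_2}\,\mu^G_{\otimes n}(\dint x)
= \int_{\R^n} e^{\langle \frac1n G y,\, nt\rangle_2}\,\nu^{\otimes n}(\dint y)
= \int_{\R^n} e^{\langle G^* t,\, y\rangle_2}\,\nu^{\otimes n}(\dint y)
= \prod_{j=1}^n M_\nu\big((G^*t)_j\big),
\]
using independence of the coordinates under $\nu^{\otimes n}$, where $(G^*t)_j = \sum_{i=1}^d g_{ij}t_i$ is the $j$-th coordinate of $G^*t\in\R^n$. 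Hence
\[
\frac1n\log\varphi_n^G(nt) = \frac1n\sum_{j=1}^n \log M_\nu\big((G^*t)_j\big).
\]
Now the key observation is that the random variables $(G^*t)_j$, $j=1,\dots,n$, are i.i.d.\ with distribution $\mathcal N(0,\|t\|_2^2)$, i.e.\ each equals $\|t\|_2\,g$ in distribution with $g\sim\mathcal N(0,1)$; consequently $\log M_\nu((G^*t)_j)$ are i.i.d.\ with mean $\E[\log M_\nu(\|t\|_2 g)] = \Lambda(t)$. The integrability condition \eqref{eq:MBedingung} is exactly what guarantees that $\E|\log M_\nu(\|t\|_2 g)| < \infty$, so that $\Lambda(t)$ is finite for every $t$; here one substitutes the Gaussian density and recognizes the integral $\int_\R |\log M_\nu(u)|\, e^{-u^2/(2\|t\|_2^2)}\,\dint u$ as finite by \eqref{eq:MBedingung} with $\varepsilon = 1/(2\|t\|_2^2)$ (the case $t=0$ being trivial since $M_\nu(0)=1$). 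By Kolmogorov's strong law of large numbers, for each fixed $t$ we get $\frac1n\log\varphi_n^G(nt)\to\Lambda(t)$ almost surely.

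The next step is to upgrade this from ``for each fixed $t$, almost surely'' to ``almost surely, for all $t$ simultaneously''. Taking a countable dense set $Q\subseteq\R^d$, the convergence holds on a single almost sure event for all $t\in Q$. To extend to all of $\R^d$ I would use convexity: for each fixed $G$, the map $t\mapsto \frac1n\log\varphi_n^G(nt)$ is convex (as a log-moment-generating function), and a pointwise limit of convex functions on a dense set, if it agrees with a continuous convex function there, forces convergence everywhere; more precisely, convex functions converging on a dense subset of an open set to a finite limit converge locally uniformly (see, e.g., Rockafellar), and since $\Lambda$ is convex and finite on all of $\R^d$, hence continuous, we obtain $\frac1n\log\varphi_n^G(nt)\to\Lambda(t)$ for all $t\in\R^d$ on the same almost sure event. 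It remains to check that $\Lambda$ is differentiable on $\R^d$, as required by Proposition~\ref{prop: gaertner-ellis thm}: writing $\Lambda(t)=\Psi(\|t\|_2)$ with $\Psi(s)=\E[\log M_\nu(sg)]$, differentiability away from the origin follows by differentiating under the expectation (justified again by the moment control coming from \eqref{eq:MBedingung}), and differentiability at the origin follows since $\Psi(s)=\Psi(-s)$, $\Psi$ is convex, and $\Psi'(0^+)=\E[g\cdot (\log M_\nu)'(0)] \cdot$(something)$=0$ because $(\log M_\nu)'(0)=\int u\,\nu(\dint u)$ is a constant times $\E[g]=0$ — so the one-sided derivatives of $t\mapsto\Psi(\|t\|_2)$ at $0$ both vanish and the gradient is $0$ there. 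Then G\"artner--Ellis yields, on the almost sure event, a full LDP with good rate function $\Lambda^*$, which is the claim.

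The main obstacle I anticipate is the second step, namely the uniform (in $t$) control needed to pass from a dense set to all of $\R^d$ and the verification of the smoothness hypothesis of G\"artner--Ellis; the pointwise SLLN is immediate but G\"artner--Ellis genuinely needs the limit to exist and be differentiable everywhere, so the convexity argument (or an explicit equicontinuity estimate using \eqref{eq:MBedingung}) has to be carried out with some care. A secondary subtlety is making sure all the ``differentiate/integrate under the expectation'' steps are legitimate, which is where condition \eqref{eq:MBedingung} — rather than mere finiteness of $M_\nu$ — does its real work: it provides the dominating integrable bound on $\log M_\nu(sg)$ and its derivatives that is uniform for $s$ in compact sets.
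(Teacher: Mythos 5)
Your proof is correct and follows the same overall architecture as the paper (compute the scaled log-moment-generating function, establish almost sure convergence to $\Lambda$, verify differentiability, invoke G\"artner--Ellis), but it takes a genuinely different route through the key uniformity step. The paper views the summands $t\mapsto\log M_\nu(\|t\|_2 g_j)$ as i.i.d.\ random elements of the Banach space $\mathscr C[-r,r]$ and applies the Banach-space strong law of large numbers (Proposition~\ref{prop:SLLN BS}), obtaining almost sure uniform convergence on $\{\|t\|_2\le r\}$ in a single stroke; condition~\eqref{eq:MBedingung} enters precisely to check $\E\|Z\|_\infty<\infty$, and the log-convexity of $M_\nu$ reduces this to the single endpoint estimate $\E\|Z\|_\infty\le 2\E|Z(r)|$. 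You instead use Kolmogorov's scalar SLLN pointwise, pass to a countable dense set, and then invoke the classical convex-analysis fact that finite convex functions converging on a dense set to a finite continuous limit converge locally uniformly. Both approaches are valid; yours is more elementary in that it avoids the vector-valued SLLN, while the paper's packages the locally uniform convergence and the integrability check into one clean lemma. Where your proposal is weaker is the differentiability of $\Lambda$: the paper spends a page carefully justifying the interchange of derivative and expectation (splitting the integral, controlling $f'$ by integration by parts, and feeding the result back through~\eqref{eq:MBedingung}) and separately handles the origin via a dominated-convergence argument, whereas you assert that differentiation under the expectation is ``justified again by the moment control'' without exhibiting the dominating bound, and the identity $\Psi'(0^+)=\E[g\,(\log M_\nu)'(0)]=0$ tacitly interchanges a limit with an expectation. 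Since $\log M_\nu$ may grow quadratically, these interchanges are exactly the places where~\eqref{eq:MBedingung} does real work, so this part would need to be fleshed out to match the paper's level of rigor, though the plan you sketch is the right one.
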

\begin{proof}
{For every $n\in\N$ we let $\varphi_n(t):= \int_{\R^d} e^{\langle x, t \rangle_2} \,\mu^G_{\otimes n}(\dint x)$, $t\in\R^d$, denote the  moment generating function of the random probability measure $\mu_{\otimes n}^{G}$. Note that $\varphi(t)$ is random because so is $G$.
We have}
\begin{align*}
\log \varphi_n(nt) & = \log \int_{\R^d} e^{\langle x, n t \rangle_2} \,\mu^G_{\otimes n}(\dint x) = \log \int_{\R^n} e^{\langle (n^{-1}G)x,n t \rangle_2}\,\nu^{\otimes n}(\dint x) = \log\int_{\R^n} e^{\langle Gx,t\rangle_2}\,\nu^{\otimes n}(\dint x),
\end{align*}
for $t\in\R^d$. For $n\in\N$, let $X^{(n)}:=(X_1,\dots,X_n)$ be a random vector with independent coordinates distributed according to $\nu$. Also assume that $X^{(n)}$ is independent from the Gaussian random matrices $G$ (and recall the dependence of $G$ on $n$, which is suppressed in our notation). Then, for every $t\in\R^d$ and all $n\in\N$, we can write
\begin{align*}
\log \varphi_n(nt) & = \log \E_{X_1,\dots,X_n}\big[ e^{\langle GX, t \rangle_2}\big] \cr
& = \log \E_{X_1,\dots,X_n}\bigg[ \exp\bigg(\sum_{i=1}^d t_i \sum_{j=1}^n g_{ij}X_j\bigg)\bigg] \cr
& = \log \E_{X_1,\dots,X_n}\bigg[ \exp\bigg(\sum_{j=1}^n X_j  \sum_{i=1}^d t_i g_{ij} \bigg)\bigg] \cr
& = \log \prod_{j=1}^n \E_{X_j}\bigg[ \exp\bigg(X_j  \sum_{i=1}^d t_i g_{ij} \bigg)\bigg]\,.
\end{align*}
Since $\sum_{i=1}^d t_i g_{ij}$ has the same distribution as $\|t\|_2\, g_j$, where $g_1,\dots,g_n$ are independent standard Gaussian random variables (that are assumed to be independent of $X_1,\ldots,X_n$), we obtain
\begin{align*}
\log \varphi_n(nt)
& = \sum_{j=1}^n \log \E_{X_j}\bigg[ \exp\bigg(\|t\|_2\, g_jX_j \bigg)\bigg] \cr
& = \sum_{j=1}^n \log M_{X_1}\big(\|t\|_2\, g_j\big),
\end{align*}
where $M_{X_1}(\lambda) = \E\big[e^{\lambda X_1}\big] = M_\nu(\lambda)$, $\lambda\in\R$, is the moment generating function of $X_1$.

Our aim is to show that $\frac 1n \log \varphi_n(nt)$ converges almost surely, as $n\to\infty$.  To prove this, we shall use the strong law of large numbers in the Banach space $\mathscr C[-r,r]$ of continuous functions on the interval $[-r,r]$, $r\in(0,\infty)$ equipped with the $\|\cdot\|_\infty$-norm (see Proposition \ref{prop:SLLN BS}). Consider the random element $Z:\Omega \to \mathscr C[-r,r]$, which assigns to each $\omega\in\Omega$ the random continuous function
$$
[-r,r]\ni x\mapsto Z(\omega)(x)=\log M_{X_1} \big(x  g(\omega)\big),
$$
where $g\sim\mathcal N(0,1)$ is a standard Gaussian random variable.
In order to apply the strong law of large numbers in this setting, we have to show that
\[
\E\|Z\|_\infty=\E \max_{x\in [-r,r]}|Z(x)| < \infty.
\]
Since moment generating functions are $\log$-convex (where finite), we know that $\log M_{X_1}$ is a convex function on $\R$. This means that for every $\omega\in\Omega$,
\[
\max_{x\in [-r,r]}|Z(\omega)(x)| \in \{|Z(\omega)(-r)|, |Z(\omega)(r)| \} \leq |Z(\omega)(-r)| + |Z(\omega)(r)|.
\]
This estimate together with the symmetry of Gaussian random variables and the substitution $y=rx$ implies that
\begin{align*}
\E\Big[\max_{x\in [-r,r]}|Z(x)| \Big] &\leq 2 \,\E\big[|Z(r)|\big] \\
&= \frac{2}{\sqrt{2\pi}} \int_{\R} |\log M_{X_1}(r x)|e^{-x^2/2}\,\dint x\\
&= {1\over r}\sqrt{2\over\pi}\int|\log M_\nu(y)|\,e^{-{1\over r^2}{y^2\over 2}}\,\dint y<\infty,
\end{align*}
where the finiteness follows from assumption \eqref{eq:MBedingung} on the measure $\nu$. Therefore, we obtain from the strong law of large numbers for random elements in separable Banach spaces (Proposition \ref{prop:SLLN BS}) that with probability $1$,
\[
\frac{1}{n}\sum_{j=1}^n \log M_{X_1}\big( \|t\|_2\, g_j\big) \ton \E_{g_1}\big[\log M_{X_1}\big(\|t\|_2\, g_1\big) \big],
\]
uniformly for all $t$ with $\|t\|_2 \leq r$. This means that we can find a subset $\Omega_1$ of our probability space $\Omega$ such that $\Pro(\Omega_1)=1$ and for every $\omega\in \Omega_1$ the corresponding realizations of the moment generating functions satisfy
\[
\frac{1}{n}\log \varphi_n(nt) \ton \Lambda(t):= \E_{g_1}\big[\log M_{X_1}\big(\|t\|_2\, g_1\big) \big]
\]
uniformly for all $t$ with $\|t\|_2 \leq r$. Since $r\in(0,\infty)$ was arbitrary, we have for almost all $\omega\in\Omega$  that the limit
\[
\lim_{n\to\infty} \frac{1}{n}\log \varphi_n(\omega)(nt) = \Lambda(t)
\]
exists for all $t\in\R^d$.

We claim that the function $\Lambda$ is differentiable on $\R^d$. To prove this, we can assume without loss of generality that $\E[X_1]=0$. Indeed, if $\E[X_1]=m\neq 0$ we can write $X_1=Y_1+m$ and note that
\begin{align*}
\Lambda(t) &= \E_{g_1}\big[\log M_{X_1}\big(\|t\|_2\, g_1\big) \big]\\
& = \E_{g_1}\big[m\|t\|_2g_1+\log M_{Y_1}\big(\|t\|_2\, g_1\big) \big] 
=\E_{g_1}\big[\log M_{Y_1}\big(\|t\|_2\, g_1\big) \big],
\end{align*}
so that we can replace $X_1$ by its centered version $Y_1$.

Let us first prove that $\Lambda$ is differentiable on $\R^d\setminus\{0\}$. We define the (infinitely differentiable) function $f(z):=\log M_{X_1}(z)$, $z\in\R$. What we need to show is that the function
$$
r\mapsto {1\over\sqrt{2\pi}}\int_{\R} f(rz)\,e^{-z^2/2}\,\dint z, \qquad r>0,
$$
is differentiable on $(0,\infty)$. This follows from the differentiation rule under the integral sign~\cite[Theorem~6.28]{klenke_book} once we prove that, for all $r>0$ and every interval $[a,b]\subseteq(0,\infty)$,
\begin{equation*}
{1\over\sqrt{2\pi}}\int_{\R} \sup_{r\in [a,b]} |z f'(rz)|\, e^{-z^2/2}\,\dint z <\infty.
\end{equation*}
Since the function $f$ is convex, the supremum is attained either at $a$ or at $b$ and it suffices to prove that for all $r>0$,
\begin{equation}\label{eq:090919a}
{1\over\sqrt{2\pi}}\int_{\R}  |z f'(rz)|\, e^{-z^2/2}\,\dint z <\infty.
\end{equation}
The differentiability of $f$ implies that for each $M>0$, $\sup_{y\in[-M,M]}|f'(y)|=:C_M<\infty$. Thus, splitting the integral into two parts, we have that, for each $M>0$,
\begin{align*}
&{1\over\sqrt{2\pi}}\int_{\R} \big|zf'(rz)\big|\,e^{-z^2/2}\,\dint z\\
&\leq  {1\over\sqrt{2\pi}}\int_{[-M/r,M/r]}\big|zf'(rz)\big|\,e^{-z^2/2}\,\dint z+ {1\over\sqrt{2\pi}}\int_{\R\setminus[-M/r,M/r]}\big|zf'(rz)\big|\,e^{-z^2/2}\,\dint z\\
&\leq {C_{M}\over\sqrt{2\pi}}\int_{\R}|z|e^{-z^2/2}\,\dint z+ {1\over\sqrt{2\pi}}\int_{\R\setminus[-M,M]}\Big|{t\over r}f'(t)\Big|\,e^{-t^2/(2r^2)}\,{\dint t\over r} \\
&=\sqrt{2\over\pi}\,C_M+ {1\over\sqrt{2\pi}}\,{1\over r^2}\int_{\R\setminus[-M,M]}|f'(t)|\,e^{-t^2/(2r^2)+\log|t|}\,\dint t.
\end{align*}
Let $M_1\in(0,\infty)$ be such that $e^{-t^2/(2r^2)+\log|t|}\leq e^{-t^2/(4r^2)}$ whenever $|t|>M_1$. Then 
\begin{align*}
\int_{\R\setminus[-M,M]}|f'(t)|\,e^{-t^2/(2r^2)+\log|t|}\,\dint t &\leq \int_{\R}|f'(t)|\,e^{-t^2/(4r^2)}\,\dint t\\
&=\int_{0}^\infty|f'(t)|\,e^{-t^2/(4r^2)}\,\dint t + \int_{-\infty}^0|f'(t)|\,e^{-t^2/(4r^2)}\,\dint t,
\end{align*}
whenever $M>M_1$. Since $f'(0)=0$ (recall that we assume that $\E[X_1]=0$) we have $f'(t)\geq 0$ for $t\geq 0$ and $f'(t)\leq 0$ if $t\leq 0$. 
Applying now integration-by-parts to the first integral shows that
\begin{align*}
\int_{0}^\infty|f'(t)|\,e^{-t^2/(4r^2)}\,\dint t = \int_{0}^\infty f'(t)\,e^{-t^2/(4r^2)}\,\dint t &= {1\over 2r^2}\int_{0}^\infty f(t)\,t\,e^{-t^2/(4r^2)}\,\dint t.
\end{align*}
The last expression is finite by our assumption \eqref{eq:MBedingung}, since $te^{-t^2/(4r^2)}=O(e^{-t^2/(8r^2)})$. As the second integral can be handled in the same way, we conclude that $\Lambda$ is differentiable on $\R^d\setminus\{0\}$.

It remains to prove that $\Lambda$ is differentiable at the origin of $\R^d$. In fact, we shall show that its differential at the origin vanishes. To this end we need to prove that
$$
\E_{g_1}[\log M_{X_1}(\|t\|_2g_1)] = o(\|t\|_2),
$$
as $\|t\|_2\to 0$. This is equivalent to
$$
\lim_{r\downarrow 0}{\E_{g_1}[f(rg_1)]\over r} = 0.
$$
Assuming we can interchange limit and expectation we can write
$$
\lim_{r\downarrow 0}{\E_{g_1}[f(rg_1)]\over r} = \E_{g_1}\Big[\lim_{r\downarrow 0}{f(rg_1)\over r}\Big] = \E_{g_1}[f'(0)g_1] = 0.
$$
To verify the assumptions of the dominated convergence theorem observe that for all $|r|\leq 1$, it follows from the intermediate value theorem that
$$
\Big|{f(rg_1)\over r}\Big| = \Big|{f(rg_1)-f(0)\over rg_1}\,g_1\Big| = |f'(\xi)g_1|
$$
for some random variable $\xi$ satisfying $|\xi|\leq|g_1|$. By the convexity of $f$ we have that $|f'(\xi)|\leq|f'(g_1)|+|f'(-g_1)|$ and the random variables $|f'(\pm g_1)g_1|$ are in fact integrable by the same arguments as we already used above. This finally completes the proof of the differentiability of the function $\Lambda$ on $\R^d$.

Having established differentiability, we can now apply the G\"artner-Ellis theorem (see Proposition \ref{prop: gaertner-ellis thm}). From this we obtain for every $\omega \in \Omega_1$ that the corresponding realization $\big(\mu_{\otimes n}^{G(\omega)}\big)_{n\in\N}$ satisfies a large deviations principle with speed $n$ and good rate function $\Lambda^*$ being the Legendre-Fenchel transform of
\[
\Lambda:\R^d \to\R,\quad t\mapsto \E_{g_1}\big[\log M_{X_1}(\|t\|_2g_1)\big].
\]
In other words, this means that the sequence of random probability  measures $\big(\mu_{\otimes n}^{G}\big)_{n\in\N}$ on $\R^d$ satisfies the desired almost sure LDP.
\end{proof}

\section{The case of uniform random projections}\label{sec:general projections}

Let $n,d\in\N$ with $d\leq n$ as in the previous section. In what follows,
$\nu$ will be any probability measure on $\R$ satisfying \eqref{eq:MBedingung} and $\nu^{\otimes n}$ shall denote its $n$-fold product probability measure on $\R^n$. We want to prove an almost sure LDP for the sequence of random probability measures
\[
\mu^I_{\otimes n} := \nu^{\otimes n} \circ \Big(\frac{1}{\sqrt{n}}I^*\Big)^{-1}
\]
on $\R^d$, where $I^*:\R^n \to \R^d$ is given by $I^* = (GG^*)^{-1/2}G$ and $G=(g_{ij})_{i,j=1}^{d,n}$ is a random Gaussian random matrix with independent standard Gaussian entries. From Lemma \ref{lem:uniform distribution Stiefel manifold} we know that $I$ is uniformly distributed on the Stiefel manifold $\mathbb V_{n,d}$. 

\begin{proof}[Proof of Theorem \ref{thm:as LDP}]
We start by observing that for any measurable set $A\subseteq \R^d$ (we use the notation that has just been introduced) and each $\omega\in\Omega$ we have
\begin{align}\label{eq:21-06-1}
\nonumber\mu^I_{\otimes n} (A) & = \bigg(\nu^{\otimes n} \circ \Big(\frac{1}{\sqrt{n}}I^*\Big)^{-1}\bigg)(A) = \bigg(\nu^{\otimes n}\circ\Big(\frac{1}{\sqrt{n}}(GG^*)^{-1/2}G\Big)^{-1}\bigg)(A) \\
\nonumber& = \bigg(\nu^{\otimes n}\circ\Big(\frac{n}{\sqrt{n}}(GG^*)^{-1/2}\frac{1}{n}G\Big)^{-1}\bigg)(A) \\
& = \bigg(\nu^{\otimes n} \circ \Big(\frac{1}{n}G\Big)^{-1}\bigg)\Big( \frac{1}{\sqrt{n}}(GG^*)^{1/2}(A)\Big) = \mu^G_{\otimes n}\Big( \frac{1}{\sqrt{n}}(GG^*)^{1/2}(A)\Big)\,.
\end{align}
The matrix $(GG^*)^{1/2}:\R^d\to\R^d$ is invertible with probability one. Moreover, the entries of $GG^*$ are simply the inner products of rows of $G$, i.e.,
\[
GG^* = \Big(\big\langle (g_{ik})_{k=1}^n, (g_{jk})_{k=1}^n \big\rangle_2 \Big)_{i,j=1}^d = \Big(\sum_{k=1}^n g_{ik}g_{jk}\Big)_{i,j=1}^d\,.
\]
We therefore obtain from the classical strong law of large numbers that
\[
\frac{1}{n}GG^* = \Big(\frac{1}{n}\sum_{k=1}^n g_{ik}g_{jk}\Big)_{i,j=1}^d \toas \id_{d\times d}.
\]
In particular, this implies that
\begin{align}\label{eq:almost sure convergence of scaled modulus to identity}
\frac{1}{\sqrt{n}}(GG^*)^{1/2}\toas \id_{d\times d}
\end{align}
as well. We will use the latter fact together with the almost sure LDP for $\big(\mu_{\otimes n}^{G}\big)_{n\in\N}$ provided in Proposition \ref{prop:ldp gaussian setting} to deduce an almost sure weak LDP for the sequence $(\mu^I_{\otimes n})_{n\in\N}$.
\vskip 2mm
\noindent\emph{Weak LDP -- upper bound:} Let $K\subseteq \R^d$ be a compact set. Let $R\in(0,\infty)$ be such that $K\subseteq \B_2^d(0,R)$. As a consequence of \eqref{eq:almost sure convergence of scaled modulus to identity}, we know that on a set $\Omega_2\subseteq \Omega$ with $\Pro(\Omega_2)=1$ the following holds. For every $\omega\in\Omega_2$ and $\varepsilon\in(0,\infty)$ there exists some $N=N(\varepsilon,\omega)\in\N$ such that for all $n\geq N$,
\[
\Big\| \frac{1}{\sqrt{n}}(GG^*)^{1/2} - \id_{d\times d} \Big\|_{\rm op} \leq \varepsilon.
\]
This implies that for large enough $n\in\N$,
\begin{align}\label{eq:inclusion}
\frac{1}{\sqrt{n}}(GG^*)^{1/2}(K) \subseteq K + \varepsilon R\,\B_2^d\,,
\end{align}
on the event $\Omega_2$.  In particular, as a Minkowski sum of two compact sets the latter set is again compact. Recalling from the proof of Proposition \ref{prop:ldp gaussian setting} that $\Omega_1\subseteq \Omega$ is the event with $\Pro(\Omega_1)=1$ on which the almost sure LDP in the Gaussian setting holds, we obtain on $\Omega_1\cap\Omega_2$ that
\begin{eqnarray*}
\limsup_{n\to\infty} \frac{1}{n} \log \mu^I_{\otimes n}(K) & = & \limsup_{n\to\infty} \frac{1}{n} \log \mu^G_{\otimes n}\Big( \frac{1}{\sqrt{n}}(GG^*)^{1/2}(K)\Big) \cr
& \leq & \limsup_{n\to\infty} \frac{1}{n} \log\mu^G_{\otimes n}\big( K + \varepsilon R\,\B_2^d\big) \cr
& \leq & -\inf_{x\in K + \varepsilon R\,\B_2^d}\Lambda^*(x)\,,
\end{eqnarray*}
where we used \eqref{eq:21-06-1}, \eqref{eq:inclusion} and Proposition \ref{prop:ldp gaussian setting} in this order.
Taking the limit, as $\varepsilon\downarrow 0$, and using the lower semi-continuity of the rate function $\Lambda^*$, we obtain the desired weak LDP upper bound
\[
\limsup_{n\to\infty} \frac{1}{n} \log \mu^I_{\otimes n}(K) \leq -\inf_{x\in K}\Lambda^*(x)
\]
on the event $\Omega_1\cap\Omega_2$.
\vskip 2mm
\noindent\emph{Weak LDP -- lower bound:} We start with the case of a bounded open set $U\subseteq \R^d$. Let us define for $\varepsilon>0$ the $\varepsilon$-interior of $U$ as
\[
U^{\circ \varepsilon} := \big\{x\in U\,:\, \B_2^d(x,\varepsilon) \subseteq U \big\}\,.
\]
Since $U$ is assumed to be bounded there exists some $R\in(0,\infty)$ such that $U\subseteq  \B_2^d(0,R)$. Let $\eps>0$.  Then, for every $\omega\in \Omega_2$ there exists $N= N(\eps,\omega)\in\N$ such that for all $n>N$,  we have that
\begin{align}\label{eq:superset}
\frac{1}{\sqrt{n}}(GG^*)^{1/2}(U) \supseteq U^{\circ \varepsilon R}\,.
\end{align}
This means that, on $\Omega_1\cap \Omega_2$,
\begin{eqnarray*}
\liminf_{n\to\infty}\frac{1}{n} \log \mu^I_{\otimes n}(U)
& = & \liminf_{n\to\infty} \frac{1}{n} \log \mu^G_{\otimes n}\Big( \frac{1}{\sqrt{n}}(GG^*)^{1/2}(U)\Big) \cr
& \geq & \liminf_{n\to\infty} \frac{1}{n} \log \mu^G_{\otimes n}\big( U^{\circ \varepsilon R}\big) \cr
& \geq & -\inf_{x\in U^{\circ \varepsilon R}} \Lambda^*(x)\,,
\end{eqnarray*}
where we used \eqref{eq:21-06-1}, \eqref{eq:superset}, and Proposition \ref{prop:ldp gaussian setting} in this order.
Since the infimum of $\Lambda^*$ is attained either in (the interior of) $U$ or on the boundary $\overline U\setminus U$, we consider these two cases separately.

First, we assume that the infimum is attained at $x_{0} \in U$. Then there exists some $\varepsilon_0\in(0,\infty)$ such that $\B_2^d(x_0,\varepsilon_0)\subseteq U$\,. Letting $\varepsilon\downarrow 0$, we find that $x_0\in U^{\circ\varepsilon R}$ once $\varepsilon<\varepsilon_0/R$\,. Hence, for $\varepsilon\downarrow 0$, we obtain
\[
\liminf_{n\to\infty}\frac{1}{n} \log \mu^I_{\otimes n}(U) \geq -\Lambda^*(x_0) = -\inf_{x\in U}\Lambda^*(x)
\]
on $\Omega_1\cap\Omega_2$.
This shows the weak LDP lower bound for bounded open sets $U$ when the infimum of $\Lambda^*$ is attained in $U$.

Now, we assume that the infimum is attained at a point $\overline x$ on the boundary $\overline U\setminus U$. In this case, we find a sequence $(x_n)_{n\in\N}\subset U$ such that, as $n\to\infty$,
\[
\Lambda^*(x_n) \downarrow \Lambda^*(\overline x) = \inf_{x\in U}\Lambda^*(x)\,.
\]
More precisely, let $\delta\in(0,\infty)$. Then there exists $N\in\N$ such that
\[
\Lambda^*(x_N) \leq \Lambda^*(\overline x)+\delta = \inf_{x\in U}\Lambda^*(x) +\delta\,.
\]
But for this $N\in\N$ and corresponding $x_N$ there exits $\varepsilon_1\in(0,\infty)$ such that $x_N\in U^{\circ \varepsilon_1 R}$. Therefore,
\[
\inf_{x\in U}\Lambda^*(x) \leq \inf_{x\in U^{\circ \varepsilon_1 R}}\Lambda^*(x) \leq \Lambda^*(x_N) \leq \inf_{x\in U}\Lambda^*(x) +\delta\,.
\]
The latter holds in fact if $\eps_1$ is replaced by any $\varepsilon \leq \varepsilon_1$. Therefore, letting $\varepsilon\downarrow 0$ (and so eventually $\varepsilon\leq \varepsilon_1$) we obtain
\[
\liminf_{n\to\infty}\frac{1}{n} \log \mu^I_{\otimes n}(U) \geq -\inf_{x\in U^{\circ \varepsilon R}} \Lambda^*(x) \geq - \inf_{x\in U}\Lambda^*(x) - \delta
\]
on $\Omega_1\cap\Omega_2$.
Letting $\delta\downarrow 0$, we obtain the weak LDP lower bound for bounded open sets $U$ when the infimum of $\Lambda^*$ is attained on the boundary of $U$.

To conclude the almost sure weak LDP, it is now left to make the transition from bounded open sets to arbitrary open sets. This can be done since the rate function $\Lambda^*$ is good. Let $U\subseteq \R^d$ be any open set. Since $\Lambda^*$ is a good rate function it has compact level sets and so there exists $R\in(0,\infty)$ such that
\[
\inf_{x\in U}\Lambda^*(x) = \inf_{x\in U\cap (\B_2^d(0,R))^\circ}\Lambda^*(x)\,.
\]
Therefore, we obtain from the almost sure weak LDP lower bound for bounded open sets that, on $\Omega_1\cap\Omega_2$,
\begin{align*}
\liminf_{n\to\infty}\frac{1}{n} \log \mu^I_{\otimes n}(U) &\geq \liminf_{n\to\infty}\frac{1}{n} \log \mu^I_{\otimes n}\big(U\cap (\B_2^d(0,R))^\circ\big)\\
& \geq \inf_{x\in U\cap (\B_2^d(0,R))^\circ}\Lambda^*(x) =\inf_{x\in U}\Lambda^*(x)\,.
\end{align*}
This completes the proof of the almost sure weak LDP and it is left to prove almost sure exponential tightness, that is, exponential tightness on a subset of $\Omega$ with $\Pro$-measure $1$.

\vskip 2mm
\noindent\emph{Exponential tightness:} Let $C\in(0,\infty)$. By Proposition \ref{prop:ldp gaussian setting} the sequence $\mu_{\otimes n}^G$ satisfies (on a set of measure one) an LDP with speed $n$ and a good rate function. As a consequence, the sequence of measures is exponentially tight, i.e., there exists $R\in(0,\infty)$ such that
\begin{align}\label{eq:exponential tightness for G-setting}
\limsup_{n\to\infty} \frac{1}{n}\log \mu_{\otimes n}^G\big(\R^d\setminus \B_2^d(0,R/2)\big) \leq -C
\end{align}
on a set $\Omega_3\subset\Omega$ with $\Pro(\Omega_3)=1$.
As already used in \eqref{eq:inclusion}, it follows from \eqref{eq:almost sure convergence of scaled modulus to identity} that on $\Omega_2$ and for sufficiently large $n\in\N$, we have
\begin{align}\label{eq:inclusion for exponential tightness}
\frac{1}{\sqrt{n}}(GG^*)^{1/2}\big(\R^d\setminus \B_2^d(0,R)\big) \subseteq \R^d\setminus \B_2^d(0,R/2)\,.
\end{align}
Therefore, we obtain for large enough $n\in\N$ that
\begin{eqnarray*}
\mu_{\otimes n}^I\big(\R^d\setminus \B_2^d(0,R)\big) & = & \mu_{\otimes n}^G\Big(\frac{1}{\sqrt{n}}(GG^*)^{1/2}\big(\R^d\setminus\B_2^d(0,R)\big)\Big) \cr
& {\leq} & \mu_{\otimes n}^G\big(\R^d\setminus \B_2^d(0,R/2) \big)
\end{eqnarray*}
on $\Omega_2\cap\Omega_3$, were we used \eqref{eq:inclusion for exponential tightness} in the last step.
Taking logarithms and multiplying by $1/n$, an application of the limit superior as $n\to\infty$ together with the previous inclusion and \eqref{eq:exponential tightness for G-setting} shows that
\[
\limsup_{n\to\infty} \frac{1}{n}\log \mu_{\otimes n}^I\big(\R^d\setminus \B_2^d(0,R)\big) \leq \limsup_{n\to\infty} \frac{1}{n}\log \mu_{\otimes n}^G\big(\R^d\setminus \B_2^d(0,R/2)\big) \leq -C
\]
holds on $\Omega_2\cap\Omega_3$.
This completes the proof of the almost sure exponential tightness, which, together with what has previously been shown and Proposition \ref{prop:equivalence weak and full LDP}, implies the full LDP on the set $\Omega_1\cap\Omega_2\cap\Omega_3$ which satisfies $\Pro(\Omega_1\cap\Omega_2\cap\Omega_3)=1$. This eventually completes the proof of the almost sure LDP stated in Theorem \ref{thm:as LDP}.
\end{proof}

\section{Law of Large Numbers: Proof of Theorem \ref{thm:SLLN}}\label{sec:slln}

We shall now proceed with a proof for the strong law of large numbers stated in Theorem \ref{thm:SLLN}. As done before, we start in the Gaussian setting and then make a transition to the general case.

Consider a Gaussian random matrix $G=(g_{ij})_{i,j=1}^{d,n}:\R^n\to\R^d$ with independent standard Gaussian entries. If $e_1,\ldots,e_n$ is the standard orthonormal basis of $\R^n$, then  $X_1:= Ge_1,\ldots,X_n:=G e_n$ are independent standard Gaussian random vectors in $\R^d$.  We have that
\begin{align*}
{1\over n}G[-1,1]^n={1\over n}\bigoplus_{i=1}^nG[-e_i,e_i] = {1\over n}\bigoplus_{i=1}^n[-X_i,X_i],
\end{align*}
where $\oplus$ stands for the Minkowski sum. By the strong law of large numbers for random convex sets \cite{ArtsteinVitale}, we have that
\begin{align*}
d_H\Big({1\over n}G[-1,1]^n , E\Big)\toas 0
\end{align*}
on the space $\mathcal{K}(\R^d)$, where $E=\E[-X_1,X_1]$ is the convex set in $\R^d$ whose support function $h_E(u)$, $u\in\R^d$, is given by the identity
\begin{align*}
h_E(u) = \E h_{[-X_1,X_1]}(u) = \E \sup_{t\in[-X_1,X_1]}\langle u,t\rangle_2 = \E|\langle u,X_1\rangle_2|,\qquad u\in\R^d.
\end{align*}
Since $\langle u,X_1\rangle_2$ has distribution $\mathcal{N}(0,\|u\|_2^2)$, we conclude that
\begin{align*}
h_E(u) = \sqrt{2\over\pi}\,\|u\|_2^2,\qquad u\in\R^d.
\end{align*}
On the other hand, this is precisely the support function of a centered Euclidean ball in $\R^d$ with radius $\sqrt{2/\pi}$. In other words this means that there is a set $\Omega_1\subseteq\Omega$ with $\Pro(\Omega_1)=1$ with the following property on $\Omega_1$: for all $\varepsilon_1>0$ there exists $N_1\in\N$ such that $\dint_H(n^{-1}G[-1,1]^n,E)\leq\varepsilon_1$ whenever $n\geq N_1$.

To make the transition from $G$ to $I^*$, we use Lemma \ref{lem:uniform distribution Stiefel manifold} and write
\begin{align*}
{1\over\sqrt{n}}I^*([-1,1]^n) = {1\over\sqrt{n}}(GG^*)^{-1/2}G[-1,1]^n = \sqrt{n}(GG^*)^{-1/2}\,{1\over n}G[-1,1]^n.
\end{align*}
According to \eqref{eq:almost sure convergence of scaled modulus to identity} there exists $\Omega_2\subseteq\Omega$ with $\Pro(\Omega_2)=1$ such that on $\Omega_2$, we have that for all $\varepsilon_2>0$ there exists $N_2\in\N$ with the property that $\|\sqrt{n}(GG^*)^{-1/2}-\id_{d\times d}\|_{\rm op}\leq\varepsilon_2$ for all $n\geq N_2$. Using  \eqref{eq:inclusion} we conclude that, on $\Omega_1\cap\Omega_2$ we have that for each $\varepsilon>0$ there exists $N\in\N$ with the property that
\begin{align*}
{1\over\sqrt{n}}I^*([-1,1]^n) \subseteq \Big(\sqrt{2\over\pi}+\varepsilon\Big)\B_2^d + \varepsilon\Big(\sqrt{2\over\pi}+\varepsilon\Big)\B_2^d = (1+\varepsilon)\Big(\sqrt{2\over\pi}+\varepsilon\Big)\B_2^d
\end{align*}
for $n\geq N$.
Similarly, using \eqref{eq:superset}, we see that on $\Omega_1\cap\Omega_2$ for each $\varepsilon>0$ there exists $N\in\N$ with the property that
\begin{align*}
{1\over\sqrt{n}}I^*([-1,1]^n) \supseteq \Big(\Big(\sqrt{2\over\pi}-\varepsilon\Big)\B_2^d\Big)^{\circ \varepsilon\big(\sqrt{2\over\pi}-\varepsilon\big)} \supseteq (1-2\varepsilon)\Big(\sqrt{2\over\pi}-\varepsilon\Big)\B_2^d
\end{align*}
for $n\geq N$.
Letting now $\varepsilon\downarrow 0$ and noting that $\Pro(\Omega_1\cap\Omega_2)=1$ proves Theorem \ref{thm:SLLN}.\hfill $\Box$

\begin{proof}[Proof of Corollary \ref{cor:slln volume}]
The statement of the corollary follows directly from Theorem \ref{thm:SLLN} together with the continuous mapping theorem and the fact that the intrinsic volumes are continuous functionals on the space of compact convex sets endowed with the Hausdorff distance.
\end{proof}

\subsection*{Acknowledgement}
ZK has been supported by the German Research Foundation under Germany's Excellence Strategy EXC 2044 – 390685587, \textit{Mathematics M\"unster: Dynamics - Geometry - Structure}. JP has been supported by the Austrian Science Fund (FWF) Project P32405 ``Asymptotic Geometric Analysis and Applications'' and a visiting professorship from the University of Bochum and its Research School PLUS. 
ZK and CT have been supported by the DFG Scientific Network \textit{Cumulants, Concentration and Superconcentration}.

\bibliographystyle{plain}
\bibliography{LDmeasureprojection}

\end{document}